\theoremstyle{plain}
\newtheorem{thm}{\protect\theoremname}
\theoremstyle{definition}
\newtheorem{defn}[thm]{\protect\definitionname}
\theoremstyle{plain}
\newtheorem{prop}[thm]{\protect\propositionname}
\theoremstyle{plain}
\newtheorem{cor}[thm]{\protect\corollaryname}
\theoremstyle{remark}
\newtheorem{rem}[thm]{\protect\remarkname}
\theoremstyle{plain}
\newtheorem{lem}[thm]{\protect\lemmaname}
\providecommand{\corollaryname}{Corollary}
\providecommand{\definitionname}{Definition}
\providecommand{\lemmaname}{Lemma}
\providecommand{\propositionname}{Proposition}
\providecommand{\remarkname}{Remark}
\providecommand{\theoremname}{Theorem}
\newcommand{\dto}{\xrightarrow[N\rightarrow\infty]{\textup{d}}}
\newcommand{\ed}[1]{\frac{1}{#1}}
\newcommand{\supp}{\text{supp}}
\newcommand{\bJ}{\mathbf{J}}
\newcommand{\bI}{\mathbf{I}}
\newcommand{\jj}{\bar{J}}
\begin{document}
\global\long\def\IN{\mathbb{N}}%
\global\long\def\II{\mathbbm{1}}%
\global\long\def\IZ{\mathbb{Z}}%
\global\long\def\IQ{\mathbb{Q}}%
\global\long\def\IR{\mathbb{R}}%
\global\long\def\IC{\mathbb{C}}%
\global\long\def\IP{\mathbb{P}}%
\global\long\def\IE{\mathbb{E}}%

\title{Optimal Weights in a Two-Tier Voting System\\ with Mean-Field Voters}
\author{Werner Kirsch\thanks{FernUniversit\"at in Hagen, Germany, werner.kirsch@fernuni-hagen.de}
\;and Gabor Toth\thanks{IIMAS-UNAM, Mexico City, Mexico, gabor.toth@iimas.unam.mx}}
\maketitle
\begin{abstract}
\noindent We analyse two-tier voting systems with voters described by a multi-group
mean-field model that allows for correlated voters both within groups
as well as across group boundaries. In this model voters are influenced by voters within their group (constituency, member state, etc.) in a positive way. Across group boundaries positive or negative influence is considered.

\noindent The objective is to determine
the optimal weights each group receives in the council, the upper level of the voting system, to minimise
the expected quadratic deviation of the council vote from a hypothetical
referendum of the overall population in the large population limit. The mean-field model exhibits
different behaviour depending on the intensity of interactions between
voters. When interaction is weak, we obtain optimal weights given
by the sum of a constant term and a term proportional
to the square root of the group's population. When interaction is
strong, the optimal weights are in general not uniquely determined.
Indeed, when all groups are positively coupled, any assignation of
weights is optimal. For two competing clusters of groups, the difference
in total weights must be a specific number, but the assignation of
weights within each cluster is arbitrary. We also obtain conditions
for both interaction regimes under which it is impossible to reach
the minimal democracy deficit as some of the weights may be negative.
\end{abstract}
Keywords: two-tier voting systems, probabilistic voting, mean-field
models, democracy deficit, optimal voting weights

2020 Mathematics Subject Classification: 91B12, 91B14, 82B20

\section{\label{sec:Introduction}Introduction}

This article studies yes-no-voting in two-tier voting systems. In
two-tier voting systems, the overall population is subdivided into
$M\in\IN$ groups (such as the member states of the European Union) of population size $N_{\lambda}\in \IN$ for each group $\lambda=1,\ldots,M $. Each
group sends a representative to a council which makes decisions for
the union. The representatives cast their vote (`aye' or `nay')
according to the majority in their respective group. For groups of
different sizes, it is natural to assign different voting weights
to the representatives.

These weights are fixed at a constitutional design stage prior to the voting in day-to-day decision making. The weights purposely structure future voting processes behind a `veil of ignorance,' like the respective voting provisions in the UN Charter, the Treaty on (the Functioning of the) European Union, the Articles of Agreement of the International Monetary Fund, etc. The respective constitutional arrangement may specify different weights and quotas for different types of decisions (e.g., all members of the UN Security Council have identical weight for procedural decisions but not for substantive decisions) or for different policy domains (all EU member states have identical weights in `sensitive' areas such as taxation and foreign policy, but cast population-dependent weights on proposals concerning the single market, agriculture, etc.). In any case, a given -- decision-type or domain-contingent -- vector of weights applies to a potentially very long sequence of `aye' or `nay' decisions on yet unknown proposals. It should not be chosen arbitrarily but `optimally' following
a fixed objective we shall discuss below.

In representative democracy, it is always an objective for a constitutional design process to reproduce the decisions of a hypothetical referendum in the decision of the legislative body. In the case of a two-tier system, this means to ensure that the decisions of the council reflect the will of the citizens. There is no way to choose the weights in the council such that the council \emph{always} agrees with the popular vote. The best one can do is to make sure it does \emph{most of the time}, a term we are trying to make precise below; in fact, this is one of the main purposes of this paper. 

One way to approach this question is to look at the power index of a voter in one of the constituencies, i.e.\! the (indirect) influence this voter has on the decisions of the council.
In a `fair' voting system, the influence of a voter should be independent of the voter's home state. This approach was introduced by Penrose \cite{Penrose} who used what is now known as the Banzhaf power index or Penrose-Banzhaf power index (see \cite{Banzhaf,Fel1999,Fel2005}). This approach leads to the famous square root law which states that fair voting weights should be proportional to $\sqrt{N_{\lambda}}$ for each group $\lambda=1,\ldots,M $.

If one instead defines `fair representation' in terms of the Shapley-Shubik index (see \cite{ShapShub1954}), then optimal voting weights should be proportional to $N_{\lambda}$. The difference between these two indices comes from a different `counting' of coalitions of voters which is equivalent to assigning a certain probability to each voting outcome.

A second path to optimal weights was opened by Felsenthal and Machover \cite{Fel1999}. These authors determined optimal weights in such a way that the \emph{democracy deficit}, i.e.\! the `expected' difference between the council vote and a hypothetical referendum among all voters, is as small as possible in a sense we shall make precise below. As the term `expected' suggests, this approach requires some sort of probability behind the voting behaviour. The proposals the voters cast their votes on in the future are completely unpredictable (i.e.\! `random') during the constitutional process, but some voting outcomes may seem to be more likely than others. Felsenthal and Machover assume that the voters react independently of each other to the randomly selected proposal (with `aye' and `nay' equally likely). This assumption leads to the Penrose-Banzhaf power index and the square root law for the optimal weights.

Straffin \cite{Straffin} considers a probability distribution on voting outcomes which leads instead to the Shapley-Shubik power index. We call this distribution the Shapley-Shubik distribution. Under the Shapley-Shubik distribution, the probability that exactly $k$ out of $N$ voters vote `aye' equals $1/(N+1)$ independently of $k $, and for a given $k $ all coalitions with $k $ members have the same probability. Note that this makes the votes dependent on each other.

A probabilistic model behind a constitutional process, be it the independence assumption of \cite{Fel1999}, the distribution in \cite{Straffin}, or any other probability distribution implicitly assumes that the correlation between voters remains more or less constant over time. Moreover, said correlation measures the degree of dependence of voters on each others' decisions taken over all proposals within the whole space of proposals or within the specified area of proposals.

The model of Felsenthal and Machover \cite{Fel1999} has been extended in various directions. Barber\`a and Jackson \cite{Barbera} developed a model in which the total utility is maximised rather than the bare yes-no-voting. Koriyama, Laslier, Mac\'{e}, and Treibich \cite{Kor} investigate the effect a positive correlation among voters has in terms of degressive proportionality. Kurz, Maaser, and Napel \cite{KMN2017} extend the space of yes-no-decisions to decisions reflected by a real number (e.g.\! a budget limit).

The paper \cite{SRL} emphasises the role of correlations between voters and thus of the choice of a voting measure to determine
optimal weights which minimise the democracy deficit. There
two families of voting measures were introduced. The first one, the `collective bias model,' generalises the Shapley-Shubik measure. In this model, there is some common belief (a system of common values or a dominating group of opinion makers) inside a constituency, which influences the voting behaviour of the entire constituency.

The second model introduced in \cite{SRL} is the `mean-field model' (MFM), which is borrowed from the statistical physics of magnetism. In physics, the model was introduced to describe a collection of small magnets (`spins') which have a tendency to align. The analogue in voting theory is that the voters inside a group influence each other so that a tendency to vote alike arises, i.e.\! there is collective behaviour inside the constituency.

The key characteristic of the MFM is that it exhibits what physicists refer to as a `phase transition,' i.e.\! a sudden qualitative change in the voting behaviour. At a certain threshold of the values of the parameter(s) which characterises the correlation between voters, the cohesion of the population in terms of their voting behaviour changes abruptly. For small parameter values, which stand for weak interactions between voters, there is a weak correlation between votes within each group. This weak correlation manifests in the form of small (or microscopic) majorities, typically close to a tie. As the parameter value increases, the correlation becomes slightly stronger, until at a certain critical value the typical magnitude of the majority jumps suddenly.

Under the models studied in \cite{SRL}, voting behaviour in the same group is no longer independent while voting results from different groups still are. The first studies of voting systems with voters' dependencies across group borders were made in \cite{Langner,KL1}, and \cite{KT2021_CBM}. In the present paper we extend work on the MFM from \cite{Toth} and
treat
a class of voting measures
which extends the impartial culture (see e.g.\! \cite{Gui1952,GK1968,GL2017,KMN2021}) by allowing correlations both
between voters in the same group as well as correlations across group
borders. The MFM has been extensively studied in physics and applied to the social
sciences. Models from statistical mechanics were first used by F\"ollmer
\cite{F=0000F6} to study social interactions. The MFM specifically
was first employed in \cite{BD}. See \cite{CGh,GBC,OEA,LSV}
for other applications.

In this article, the notion of `fair voting weights' corresponds to the `optimal voting weights' that minimise the democracy deficit (cf. Definition \ref{def:democracy-deficit}). Instead of `constituencies' we will refer to `groups' of voters.  The intuitive notions of interactions between voters or the cohesion of a group or the entire population in the MFM will be referred to as `coupling,' a term made formal in the definition of the model in Section \ref{subsec:Model}.

In the present paper, we shall prove that asymptotically the optimal weights for the MFM \emph{with interacting groups} are proportional to
   $\sqrt{N_{\lambda}}~+~C$
as long as the coupling is not too strong. The constant $C$ in the above expression reflects the influence from other groups, whereas the term $\sqrt{N_{\lambda}} $ comes from the coupling within the group $\lambda $.

If the coupling is strong, then, under certain assumptions on the form of the coupling, we find that the optimal weights are essentially arbitrary, i.e.\!, for large $N_{\lambda}$, the democracy deficit is asymptotically independent of the choice of the voting weights. This can be explained by the fact that under strong coupling most voters will agree anyway, so it does not matter how we weight the different groups.

The rest of the paper is organised as follows: in Section \ref{sec:Def}, we first define some basic concepts such as voting measures, the democracy deficit, and the concept of
optimal weights in the council. Afterwards, we introduce and discuss
the MFM as well as previous results concerning the optimal weights for independent groups. Sections \ref{sec:weak}
and \ref{sec:Optimal-Weights-Low} contain the main results of this paper: we discuss the optimal weights
under the MFM for weak and strong coupling between voters, respectively.
Section \ref{sec:Independent-Clusters} treats several independent clusters of
groups, and Section \ref{sec:conclusion} concludes the paper. Finally, Section \ref{sec:Appendix} is an appendix which contains technical details regarding the democracy deficit, the optimal weights, and the MFM, as well as the proofs of the results presented in this paper.

\section{\label{sec:Def}Definition of Basic Concepts and Results}
In this section, we give a rigorous definition of voting measures, the democracy deficit, and the MFM, and discuss a few basic properties.
\subsection{The Setting}
Suppose the overall population is of size $N=N_{1}+\cdots+N_{M}$, whereas the group $\lambda $
has $N_{\lambda}$ voters, where the subindex $\lambda$ stands for
the group $\lambda\in\left\{ 1,\ldots,M\right\} $ . Let the two voting
alternatives be recorded as $\pm1$, $+1$ for `aye' and $-1$ for
`nay'. The vote of voter $i\in\left\{ 1,\ldots,N_{\lambda}\right\} $
in group $\lambda$ will be denoted by the variable $X_{\lambda i}$.
We will refer to the $N $-tuples $\left(x_{11},\ldots,x_{1N_{1}},\ldots,x_{M1},\ldots,x_{MN_{M}}\right)\in\left\{ -1,1\right\} ^{N}$
as voting configurations.

Throughout this article, we will study the asymptotic behaviour of the MFM, and we will assume that as the overall population
goes to infinity, so do the group populations, and that their relative sizes
compared to the overall population converge to fixed limits:
\begin{defn}
\label{def:group_sizes} We define the \emph{relative group size parameters}
for each group $\lambda$:
\[
\alpha_{\lambda}:=\lim_{N\rightarrow\infty}\frac{N_{\lambda}}{N}.
\]
\end{defn}

We will assume that $\alpha_{\lambda}>0$ holds for each group.

\begin{defn}\label{voting_margins}
For each group $\lambda$, we define the \emph{voting margin} $S_{\lambda}:=\sum_{i=1}^{N_{\lambda}}X_{\lambda i}$.
The overall voting margin is $S:=\sum_{\lambda=1}^{M}S_{\lambda}$.
\end{defn}
So there is a majority in group $\lambda $ in favour of a given proposal if $S_{\lambda}>0$.
Each group casts a vote in the council by applying the majority rule
to the group vote. Thus, the representative of group $\lambda $ votes `aye' if $S_{\lambda}>0 $. In other words,
\begin{defn}
The \emph{council vote of group} $\lambda$ is given by
\[
\chi_{\lambda}~:=~\chi(S_{\lambda})~=\begin{cases}
\,1, & \text{if }S_{\lambda}>0,\\
\,-1, & \text{otherwise.}
\end{cases}
\]
\end{defn}
Note that, for the MFM, the probability of a tie in each group goes to 0 as the population diverges to infinity. Hence, the decision to have group representatives cast a vote against the proposal in case of a tie as opposed to a vote in favour is inconsequential.

Each group $\lambda$
is assigned a voting weight $w_{\lambda}$. It is the goal of this paper to determine the `optimal' choice of these weights.

The weighted sum
\begin{align*} \sum_{\lambda=1}^{M}w_{\lambda}\chi_{\lambda} \end{align*}
is the \emph{council vote}.  Weights $w_{1},\ldots,w_{M} \in \IR$
together with a relative quota $q\in (0,1)$ constitute a weighted voting system
for the council, in which a coalition $A\subset\{1,2,\ldots,M\}$
is winning if
\begin{align*}
\sum_{\lambda \in A}\,w_{\lambda}~>~q\;\sum_{\lambda=1}^{M}\,w_{\lambda}.
\end{align*}

For the democracy deficit approach (see Definition \ref{def:democracy-deficit}), the relative quota in the council has no effect on the optimal weights\footnote{As the council vote only depends on the voting weights assigned to each group's representative and the vote cast by them, and the popular vote is of course independent of the relative quota, too, we see that the democracy deficit itself is invariant under all possible choices of the relative quota $q \in (0,1)$.}. We can take $q=1/2$, i.e.\! a simple
majority of the weighted votes suffices in the council. With $q=1/2$ the council vote is in favour of a proposal if
$\sum_{\lambda=1}^{M}w_{\lambda}\chi_{\lambda}>0$.

It is reasonable to choose the voting weights $w_{\lambda}$ in the
council in such a way, that the difference between the council vote
and a hypothetical referendum
\begin{align*}
\left|\,S-\sum_{\lambda=1}^{M}\,w_{\lambda}\,\chi_{\lambda}\,\right|
\end{align*}
is as small as possible in absolute value. We will call this magnitude the raw democracy deficit in order to distinguish it from the expectation we will be referring to as the `democracy deficit' later on.

There is clearly no choice of
weights which makes the raw democracy deficit \emph{uniformly} small over all possible voting
configurations. For any two choices of voting weights, there are some voting configurations where the first choice of weights has a lower raw democracy deficit and some voting configurations in which the other choice of weights is more favourable. Hence, all we can hope for is to make it small `on
average.' More precisely, we try to minimise the expected quadratic
deviation of $\sum_{\lambda=1}^{M}w_{\lambda}\chi_{\lambda}$ from
$S$.

To follow this approach, we have to clarify what we mean by `expected'
deviation, i.e.\! there has to be some notion of randomness underlying
the voting procedure.

We assume each individual has a set of deterministic and rational preferences concerning all possible issues which can be voted on.
However, the issue selected for a vote is assumed to be randomly chosen. If the choice is between two candidates for public office \emph{A} and \emph{B}, there is no fixed order in which the two must appear on the ballot; \emph{A} could correspond to the option $+1$ or $-1$.
Each yes/no question can be posed in different ways. Suppose the referendum is on a tax hike. The option $+1$ could correspond to implementing the hike, but it could also correspond to keeping the existing tax system. In short, there is no fundamental distinction between $+1$ and $-1$ beyond the fact that they represent two mutually exclusive choices. The voting configurations $(x_{11}, \ldots, x_{MN_M})$ thus provide information on the cohesion within the population. Is there a large majority in favour of one alternative or is the outcome close to a tie? The patterns in the voting configurations over all possible issues are described by a probability measure on the space $\{-1,1\}^N$.

These considerations lead to the following definition:
\begin{defn}
A \emph{voting measure} is a probability measure $\IP$ on the space
of voting configurations $\left\{ -1,1\right\} ^{N}=\prod_{\lambda=1}^{M}\,\{-1,1\}^{N_{\lambda}}$
with the symmetry property
\begin{align}
\mathbb{P}\left(X_{11}=x_{11},\ldots,X_{MN_{M}}=x_{MN_{M}}\right)~=~\mathbb{P}\left(X_{11}=-x_{11},\ldots,X_{MN_{M}}=-x_{MN_{M}}\right)\label{eq:symmetry}
\end{align}
for all voting configurations $\left(x_{11},\ldots,x_{MN_{M}}\right)\in\left\{ -1,1\right\} ^{N}$.
By $\mathbb{E}$ we will denote the expectation with respect to $\mathbb{P}$.
\end{defn}

The simplest voting measure is the $N$-fold product of the probability measures $P_0$ on $\{-1,1\}$ defined by
\begin{align*}
P_{0}(1) \coloneq P_{0}(-1) \coloneq \frac{1}{2},
\end{align*}
which models independence between all the voting results $X_{\lambda i}$, $\lambda=1,\ldots,M$, $i=1,\ldots,N_\lambda$.
In this much analysed case, known as the \emph{impartial culture}, we have
\begin{align*}
\mathbb{P}\left(X_{11}=x_{11},\ldots,X_{MN_{M}}=x_{MN_{M}}\right)~=~\prod_{\lambda=1}^{M}\prod_{i=1}^{N_{\lambda}}\,P_{0}(X_{\lambda i}=x_{\lambda i})~=~\frac{1}{2^{N}}\,
\end{align*}
for all voting configurations $\left(x_{11},\ldots,x_{MN_{M}}\right)\in\left\{ -1,1\right\} ^{N}$, i.e.\! each voting configuration occurs with the same probability.

Once a voting measure is given, the quantities $X_{\lambda i}$, $S_{\lambda}$,
$\chi_{\lambda}$,  the raw democracy deficit, etc.\!, are random variables defined on the same
probability space $\left\{ -1,1\right\} ^{N}$.

An extension of these binary voting systems and the probabilistic voting models describing the voters' behaviour is taking into account the possibility of abstainig from a vote. This can happen at both the population level, where each voter can decide if they want to vote in favour, against, or abstain from voting, as well as at the council level, where each representative can abstain from voting if their group is tied about the issue at hand. The latter does not present a meaningful distinction compared to the setup without abstentions considered in this article, as under the MFM (as well as other voting models such as the collective bias model considered in \cite{KT2021_CBM}), the probability of a draw in a given group goes to 0 as the population goes to infinity. Therefore, abstentions will not occur in the large population limit. Allowing for abstentions at the population level presents a number of challenges, such as the question of how to define a voting model which is unclear even in the simplest case of independent voting (see \cite{FelsMach1997,Birkmeie2011,BirKaePu2011}). It is an interesting question to consider in future research.

\subsection{Democracy Deficit and Optimal Weights}

With the concept of a voting measure at our disposal, we can formally
define the democracy deficit. For more details on the topic of democracy deficit and optimal weights, see \cite{KT2021_CBM}.
\begin{defn}
\label{def:democracy-deficit}The \emph{democracy deficit} given a
voting measure $\mathbb{P}$ and a set of weights $w_{1},\ldots,w_{M} \in \IR$
is defined by
\begin{align*}
\Delta_{1}~=\Delta_{1}(w_{1},\ldots,w_{M})~:=~\mathbb{E}\left[\left(S-\sum_{\lambda=1}^{M}w_{\lambda}\chi_{\lambda}\right)^{2}\right]\,.
\end{align*}
We call $(w_{1},\ldots,w_{M})$ \emph{optimal weights} if they minimise
the democracy deficit, i.e.\!
\begin{align*}
\Delta_{1}(w_{1},\ldots,w_{M})~=~\min_{(v_{1},\ldots,v_{M})\in\mathbb{R}^{M}}\;\Delta_{1}(v_{1},\ldots,v_{M}).
\end{align*}
\end{defn}

\begin{rem}
For any weighted voting system, we obtain an equivalent voting system by multiplying each voting weight by the same positive constant and leaving the relative quota unchanged. Therefore, whenever we speak of the uniqueness of the vector of optimal weights,
it shall be understood to mean `uniqueness up to multiplication by
a positive constant.'
\end{rem}

It is mathematically convenient to allow \emph{real} numbers as weights. In practice, however, \emph{integer valued} weights are more convenient, if not required. Fortunately this can always be implemented as the following Lemma shows.

\begin{lem}
   Given a weighted voting system with weights $w_{1}, w_{2},\ldots,w_{K}\in\IR $, there is always an equivalent voting system with weights
   $\tilde{w}_{1}, \tilde{w}_{2},\ldots,\tilde{w}_{K}\in\IN $
\end{lem}
\begin{proof}
    Since the space of possible voting configurations $\left\{ -1,1\right\} ^{N}$ is finite, the voting system is unchanged by very small changes in the weights. Thus we may suppose without loss of generality that the weights are rational numbers. By multiplying the weights as well as the quota by an integer, the smallest common denominator, we obtain an equivalent voting system with integer weights. 
\end{proof}

Note that the democracy deficit depends both on the weights and the voting measure. We observe that minimising the democracy deficit implies
that the magnitude and sign of the council vote approximate well
the magnitude and sign of the popular vote. We do not merely
wish to achieve agreement between the two outcomes in the binary sense
but a rather stronger property: the population should observe that the
council follows the public opinion as closely
as possible. This stands in contrast to the criterion of minimising the probability that the binary council decision differs from the decision made by a referendum, which is less strict in the sense that for a favourable public opinion of 51\%, a 51\% vote in the council and a 100\% vote would be considered equally satisfactory. However, a 100\% vote in the council would not be a good representation of public opinion at all. The 49\% minority might feel they are not represented in the council at all, giving rise to populist anti-elite sentiment among them. Viewed from this perspective, adjusting the voting outcomes in the council in such a way that they follow the popular opinion as closely as possible is a worthwhile goal.

Our objective is to choose the weights such that the democracy deficit is minimised.
By taking partial derivatives of $\Delta_{1}$ with respect to
each $w_{\lambda}$ and equating each one to 0, we obtain a system of linear equations that characterizes
the optimal weights. Indeed, for $\lambda=1,\ldots,M$,
\begin{equation}
\sum_{\nu=1}^{M}\;\mathbb{E}\left(\chi_{\lambda}\chi_{\nu}\right)w_{\nu}~=~\mathbb{E}\left(\chi_{\lambda}S\right)\,.\label{eq:LES}
\end{equation}
Defining the matrix $A^N$, the weight vector $w$ and the vector $b^N$
on the right hand side of \eqref{eq:LES} by
\begin{align}
A^N~ & :=~\left(A^N_{\lambda\nu}\right)_{\lambda,\nu=1,\ldots,M}~:=~\left(\mathbb{E}\left(\chi_{\lambda}\chi_{\nu}\right)\right)_{\lambda,\nu=1,\ldots,M},\label{eq:A0}\\
w^N~ & :=~\left(w^N_{\lambda}\right){}_{\lambda=1,\ldots,M},\nonumber \\
b^N~ & :=~\left(b^N_{\lambda}\right)_{\lambda=1,\ldots,M}~:=~\,\left(\mathbb{E}\left(\chi_{\lambda}S\right)\right)_{\lambda=1,\ldots,M}\,,\nonumber
\end{align}
we may write \eqref{eq:LES} in matrix form as
\begin{align}
A^N\;w^N~=~b^N\,.\label{eq:LESM}
\end{align}
A solution $w$ of \eqref{eq:LESM} is a minimum of $\Delta_1$  if the matrix $A^N$,
the Hessian of $\Delta_{1}$, is positive definite. In this case,
the matrix $A^N$ is invertible, and consequently there is a unique tuple
of optimal weights, namely the unique solution of \eqref{eq:LESM}. However, due to the difficulties associated with the calculation of the above quantities for finite (but large) populations, we will calculate the asymptotic weights in the limit that the group populations all go to infinity in accordance with Definition \ref{def:group_sizes}. See Section \ref{subsec:more_dem_def} of the Appendix for the technical details concerning the asymptotic behaviour of the democracy deficit and the optimal weights.

\subsection{\label{subsec:Model}Mean-Field Model}

In statistical mechanics, the MFM\footnote{This model is also called the `Curie-Weiss model', named after the
physicists Pierre Curie and Pierre Weiss.} is usually defined for a single set of spins or binary random variables.
There is an energy function, also called Hamiltonian, that assigns to
each spin configuration $x=\left(x_{1},\ldots,x_{N}\right)\in\left\{ -1,1\right\} ^{N}$
a real number
\begin{equation}
\mathbb{H}(x):=-\frac{J}{2}\left(\frac{1}{\sqrt{N}}\sum_{i=1}^{N}x_{i}\right)^{2}.\label{eq:Ham_single_group}
\end{equation}
This energy function determines the `cost' of the configuration.
Less costly configurations are thought of as more common. The only
parameter of the model is  the parameter $J\geq 0$ which reflects the strength of the coupling between spins. In physics, $J$ can be interpreted as an `inverse temperature' parameter.

The probability measure on the space of configurations $\left\{ -1,1\right\} ^{N}$
is a so called Gibbs measure that assigns each configuration $x$ the probability
\begin{align}\label{eq:sCW}
\mathbb{P}\left(x\right)~=~\mathbb{P}_{J}\left(x\right)~:=~Z^{-1}\exp\left(-\mathbb{H}(x)\right).
\end{align}
$Z$ is a normalisation constant which makes $\IP$ a probability measure. $Z$
 depends on both $J$ and the number of spins $N$.
The minus sign in \eqref{eq:sCW} makes configurations $x \in \{-1,1\}^N$with lower energy levels $\mathbb{H}(x)$
more probable under the measure $\mathbb{P}$. This means configurations with a large majority of $+1$'s or a large majority of $-1$'s are more likely, i.e.\! there is a tendency to align with other voters. This tendency is stronger for large $J$.

In \cite{SRL}, 
this single-group
model was employed to study two-tier voting systems. The limitation of such an
approach is that each group is described by a separate single-group
model, thus precluding the possibility of studying correlated voting
across group boundaries.

In order to study the coupling between voters belonging to different
groups, we need to define a model with several different
sets of spins that potentially interact with each other in different
ways. Instead of a single inverse temperature parameter, there is
a coupling matrix that describes the interactions between voters. We will call this
matrix
\[
\mathbf{J}:=(J_{\lambda\nu})_{\lambda,\nu=1,\ldots,M}.
\]
$J_{\lambda\lambda}$ describes the coupling of voters inside the group $\lambda $, and $J_{\lambda\nu}, \lambda\not=\nu$, stands for the coupling of voters from group $\lambda$ and those from group $\nu$.

Just as in the single-group model, there is a Hamiltonian function
that assigns each voting configuration a certain energy level. This
energy level can be interpreted as the cost of a given voting configuration
in terms of the conflict between different voters. Voters tend to
vote in such a way that the conflict is minimised. For each voting
configuration $\left(x_{11},\ldots,x_{MN_{M}}\right)\in\left\{ -1,1\right\} ^{N}$,
we define
\begin{align}
\mathbb{H}\left(x_{11},\ldots,x_{MN_{M}}\right) & :=-\frac{1}{2}\sum_{\lambda,\nu=1}^{M}J_{\lambda\nu}\;
\left(\frac{1}{\sqrt{N_{\lambda}}}\sum_{i=1}^{N_{\lambda}}x_{\lambda i}\right)\,\left(\frac{1}{\sqrt{N_{\nu}}}\sum_{j=1}^{N_{\nu}}x_{\nu j}\right)\,.
\label{eq:Hamilton}
\end{align}

In the single-group model, we had to assume that $J\geq 0$ in order to get a decent probability measure. In the multi-group case we need analogously $\bJ\geq 0$, in the sense that the symmetric matrix $\bJ$ is positive semi-definite\footnote{Note that the assumption of a symmetric coupling matrix by itself represents no constraint of the model since for a non-symmetric coupling matrix $\bJ'$ there is an equivalent mean-field model with a coupling matrix $\bJ$, where the off-diagonal entries are
\[
J_{\lambda\nu}=\frac{J_{\lambda\nu}'+J_{\nu\lambda}'}{2},\quad\lambda\neq\nu.
\]
}, i.e.\! $\langle x,\bJ x \rangle\geq 0$ holds for all vectors $x \in \IR^M$. The symbol $\langle \, \cdot\,,\,\cdot\, \rangle$ stands for the Euclidean inner product on $\IR^M$. Note that this implies $J_{\nu\nu}\geq 0$ for all $\nu$, but the off-diagonal entries $J_{\nu\lambda}, \nu\not=\lambda$, can be positive or negative.

Instead of each voter interacting with each other voter in the exact
same way, voters in different groups $\lambda,\nu$ are coupled by
a coupling constant $J_{\lambda\nu}$. These coupling constants subsume
the `inverse temperature' parameter $J$ found in the single-group
model. We note that depending on the signs of the coupling constants
$J_{\lambda\nu}$ different voting configurations have different energy
levels assigned to them by $\mathbb{H}$. If all coupling constants
are positive, there are two voting configurations that have the lowest
energy levels possible: $(-1,\ldots,-1)$ and $(1,\ldots,1)$. All
other voting configurations receive higher energy levels. The highest
levels are those where voters are evenly split (or closest to it in
case of odd group sizes). This represents the assumed tendency of
voters to cooperate with each other if they are positively coupled.
\begin{defn}\label{multi-group_MFM}
Let $\bJ$ be a positive semi-definite $M\times M$ matrix, and let $\mathbb{H}$ be defined by \eqref{eq:Hamilton}. The mean-field
probability measure $\mathbb{P}$, which gives the probability of each
of the $2^{N}$ voting configurations, is defined by
\begin{align}
\mathbb{P}_{\bJ}\left(X_{11}=x_{11},\ldots,X_{MN_{M}}=x_{MN_{M}}\right):=Z^{-1}\exp\left(-\mathbb{H}\left(x_{11},\ldots,x_{MN_{M}}\right)\right)\label{eq:Pbeta}
\end{align}
for each $\left(x_{1},\ldots,x_{N}\right)\in\left\{ -1,1\right\} ^{N}$. $Z$ is a normalisation constant
which depends on $N$ and $\bJ$. $\bJ$ is called the \emph{coupling matrix} of the model. Whenever the matrix $\bJ $ is clear from the context, we drop the subscript and write $\IP$ instead of $\IP_{\bJ} $. The expectation with respect to $\IP_{\bJ} $ is called $\IE_{\bJ} $ or simply $\IE $.
\end{defn}
The mean-field measure is indeed a voting measure, as can be seen
from the definition of the Hamiltonian \eqref{eq:Hamilton}. We note that impartial culture is a special case of the MFM if we set $\bJ = 0$. The single-group MFM is another special case of the multi-group version (for the number of groups $M=1$, Definition \ref{multi-group_MFM} reduces to the probability measure given in \eqref{eq:sCW}).

In the field of statistical physics, the regimes of the MFM are called `temperature regimes'
because the single-group model
has only a single parameter $J\geq 0$ which can be interpreted
as the inverse temperature of the spin system. In the present context,
different temperatures correspond to different intensities of coupling
between voters. The suitably normalised group voting margins (see Definition \ref{voting_margins}) behave differently in each of the three regimes, which constitutes an emergent phenomenon rather than being an explicit part of Definition \ref{multi-group_MFM} of the model. A high temperature means there is a lot of disorder
or confusion, and the voters mostly make up their own minds.
There may still be \emph{some} tendency to vote alike; however, the typical majorities are not large. We will
call this the `weak coupling regime,' which is characterised by the matrix $\bI-\bJ$ being positive definite (with $\bI$ being the identity matrix), i.e.\! $\bI-\bJ > 0$.
At low temperatures, voters
want to align with others. As a result, votes will be strongly correlated, with large majorities in favour of one alternative being typical.
We will call this the `strong coupling regime,' for which $\bI-\bJ$ is not positive semi-definite, i.e.\! $\bI-\bJ \ngeq 0$. See Section \ref{subsec:Regimes} of the Appendix for a more thorough discussion of the model and its regimes.

At this point we will very briefly discuss the behaviour of the single-group model, which due to its simple nature is more easily understood in intuitive terms. The weak coupling regime is defined by $J\in\left[0,1\right)$. This regime, just as in the multi-group model which is the topic of this article, is characterised by small majorities which manifest through expected voting margins $\IE\left|S\right|$ that behave asymptotically like $C_{J}\sqrt{N}$, which is of smaller order than the population $N$. However, the prefactor $C_{J}$ is independent of $N$ but depends on $J$, and in fact $\lim_{J\nearrow1}C_{J}=\infty$ holds. So while it is true that for fixed $J\in\left[0,1\right)$ the majorities are typically small, it should also be noted that they become larger as $J\nearrow1$. This observation is complemented by the behaviour of the strong coupling regime $J\in\left(1,\infty\right)$: while the typical majority is of order $N$, i.e.\! large, by choosing $J$ close enough to 1, we can reduce the macroscopic majority and come arbitrarily close to a tie. $\IE\left|S\right|$ behaves asymptotically like $C_{J}N$ for all $J>1$ and $\lim_{J\searrow1}C_{J}=0$ is satisfied. The MFM thus covers the entire range of typical majorities being very small to very large, nearly unanimous. In our opinion, this flexibility makes it an interesting model to study and to apply to the problem of optimal weights in two-tier voting systems.

Before we state the results concerning the optimal weights under the MFM, we recapitulate the corresponding results for independent groups.

\subsection{Independent Groups} \label{Independent_Groups}

The case of independent groups was analysed in \cite{SRL}.
In that article, each group was described by a separate MFM. Independent
groups can also be described by the multi-group MFM by choosing a diagonal coupling matrix $\bJ$. In this case,
the coefficient matrix $\eqref{eq:A0}$ in the linear equation system
that characterises the optimal weights is diagonal and the entries
are all equal to 1. Hence, the solution is simple: for each group
$\lambda$, the optimal weight $w_{\lambda}$ is given by
\[
w_{\lambda}=\IE\left(\chi_{\lambda}S\right)=\IE\left(\chi_{\lambda}S_{\lambda}\right)=\IE\left|S_{\lambda}\right|.
\]
The last expression above behaves differently depending on the regime
of the model. In the single-group model, the weak coupling regime corresponds to $J<1$, where $J$ is the coupling constant in \eqref{eq:Ham_single_group}, and the strong coupling regime is $J>1$. In the weak coupling regime, $\IE\left|S_{\lambda}\right|$
behaves like  $C_{\lambda}\sqrt{N_{\lambda}}$  for large populations (we shall say `is asymptotically equal to;' see Definition \ref{def:appr} in Section \ref{subsec:Regimes} of the Appendix).
Hence, the optimal weight for each group is proportional to the square
root of each group's population, possibly with different constants
$C_{\lambda}$ for each group. This is qualitatively similar to the
prescription made by Penrose's square root law. This result can be
interpreted as the weak coupling regime being close enough to independence
so as not to affect the optimal weights, at least in a qualitative
sense. Note that, crucially, this is only true as long as the groups
are independent. If we relax this assumption, the square root law
fails to hold as we will see in Section \ref{sec:weak}.

In the strong coupling regime, $\IE\left|S_{\lambda}\right|$ is asymptotically
equal to $C_{\lambda}N_{\lambda}$. Thus, the optimal weight
is proportional to the group's population. This result also fails
to hold in the more general setting considered in the present article,
which features dependence between voters belonging to different groups. We will see in Section \ref{sec:Optimal-Weights-Low}
that introducing dependence in the strong coupling regime can lead
to the optimal weights not being uniquely determined.

Since under independent groups each group can be in a different regime,
we see that having a structure of strong coupling between members
of a group is favourable with regard to the optimal weight. We will
return to the question of different groups being in different regimes
in Section \ref{sec:Independent-Clusters}, where we generalise the results presented here to independent
clusters of several groups each.

The main qualitative aspect we would like to note concerning the optimal
weights under independent groups is that the they are proportional
to either the population or the square root of the population. As we will see in the next two sections, this is no longer
the case when the groups are not independent. We will see that, in
some cases, a constant summand appears in the formula for the optimal
weights. In other cases, the optimal weights are no longer uniquely
determined. Under some circumstances, the optimal weights turn out negative. The new features introduced by dependent groups are manifold.

\section{Optimal Weights for the Weak Coupling Regime}\label{sec:weak}

In this section, we will analyse the optimal weights for the MFM in its weak coupling regime. The coupling between voters in this regime is -- as implied by the name of the regime -- fairly weak. Although the voters are not independent, they do tend to make up their mind on their own for the most part. Another way to describe this regime is to say there is a large amount of turmoil in the overall population, with polarised opinions.

We will analyse three scenarios, each one characterised by the form of the coupling matrix. But first, we will discuss what it means for the MFM to be in the weak coupling regime.

\subsection{Basics}\label{subsec:Basics}

To describe a given system of groups of voters we have to adjust the quantities $J_{\lambda\nu}$ according to
the concrete situation. However, typically voting weights will be described in a kind of constitution or founding treaty, which should be designed for a long time period. As time goes by, interactions between the founding members may change, new groups may join the union, and groups may leave the union. So, for defining voting weights in a founding document, it seems appropriate to consider a `typical', simplified set of coupling parameters $J_{\lambda\nu}$. In fact, one of the scenarios we will explore will feature internal coupling $J_{0}:=J_{\lambda\lambda}$ and coupling between groups $\bar{J}:=J_{\lambda\nu} $ independent of the groups $\lambda \neq \nu$ involved.  More precisely, we consider the following model:

\begin{defn}\label{def:hom}
   An MFM with coupling matrix $\mathbf{J}(J_{0},\bar{J}) $ given by
\begin{align*}
  \bJ(J_{0},\bar{J})_{\nu\lambda}~:=~\left\{
                      \begin{array}{ll}
                        J_{0}, & \hbox{for $\nu=\lambda $,} \\[2mm]
                        \bar{J}, & \hbox{otherwise,}
                      \end{array}
                    \right.
\end{align*}
with $J_{0}>0 $ and $\bar{J}\in\IR$ is called a \emph{balanced model}. It is called \emph{homogeneous} if $J_{0}=\bar{J}$.
\end{defn}
\begin{lem}\label{lem:hom}
   For the balanced model, the matrix $\bJ=\bJ(J_{0},\bar{J})$ is positive definite if and only if
\begin{align*}
   -\frac{J_{0}}{M-1}~<~\bar{J}~<~J_{0}.
\end{align*}
For the homogeneous model (with $J_{0}=\bar{J}>0$), the matrix $\bJ$ is always positive semi-definite but not positive definite.
\end{lem}
Lemma \ref{lem:hom} follows directly from Lemma \ref{lem:balanced_matrix} (see Section \ref{subsec:Matrix} of the Appendix). This lemma gives bounds on the values of the two constants $J_0$ and $\bar{J}$ within which the coupling matrix is positive semi-definite, an assumption we made when we defined the MFM. As we see, the bounds are not symmetric with respect to the origin: the coupling constant $\bar{J}$ which defines coupling between groups can be up to $J_0$, the coupling between voters in the same group. As a lower bound, we have a constant smaller in absolute value which depends on the number of groups in the model. We can interpret this lemma as stating that the coupling between groups can be no stronger than the coupling within groups.

Before we turn to the optimal weights, we identify the two regimes for balanced models.
\begin{lem}\label{lem:balancedregime}
For the balanced  model $\bJ(J_{0},\bar{J})$ (with $-J_0/(M-1)<\bar{J}\leq J_{0}$), we have:
\begin{enumerate}
\item If $\bar{J}\geq 0$, then
   the coupling matrix $\bJ(J_{0},\bar{J})$ is in the weak coupling regime if
\begin{align*}
   J_{0}+(M-1)\bar{J}~<~1,
\end{align*}
and in the strong coupling regime if
\begin{align*}
   J_{0}+(M-1)\bar{J}~>~1.
\end{align*}
\item If $\bar{J}< 0$, then
   the coupling matrix $\bJ(J_{0},\bar{J})$ is in the weak coupling regime if
\begin{align*}
   J_{0}+|\bar{J}|~<~1,
\end{align*}
and in the strong coupling regime if
\begin{align*}
   J_{0}+|\bar{J}|~>~1.
\end{align*}

\end{enumerate}
\end{lem}
Lemma \ref{lem:balancedregime} follows from Lemma \ref{lem:balanced_matrix}. The lemma says that the sign of the inter-group coupling constant $\bar{J}$ affects the range of $|\bar{J}|$ that stays within the weak coupling regime: for non-negative inter-group couplings, the value has to be fairly small to stay in the regime, whereas for negative $\bar{J}$, there is more leeway. In a sense, this condition is complementary to the condition in Lemma \ref{lem:hom} which characterises the positive definiteness of $\bJ(J_{0},\bar{J})$.

\subsection{\label{subsec:Positive-Coupling-High} Friendly World}
In this first scenario, we consider an MFM with balanced coupling matrix $\bJ(J_{0},\bar{J}) $ with positive coupling between all groups, i.e.\! with $0 \leq \bar{J} \leq  J_{0}$. So the matrix $\bJ(J_{0},\bar{J}) $ is positive semi-definite and there are only \emph{positive} correlations between votes both within the same group and across group boundaries. This scenario models a union of groups that relate in a friendly way to each other.

We assume that $\bJ(J_{0},\bar{J}) $ is in the weak coupling regime, so $J_{0}+(M-1)\bar{J}<1 $ by Lemma \ref{lem:balancedregime}. For this model, we prove an extension of Penrose's square root law.

We set
\begin{align*}
  \rho~&:=~\lim_{N\to\infty}\IE(\chi_{1}\chi_{2})~=~ \frac{2}{\pi}\,\arcsin\left(\frac{\bar{J}}{1-J_{0}-(M-2)\bar{J}}\right),\\
\tau~&:=~\frac{\bar{J}}{1-J_{0}-(M-2)\bar{J}},\\
\eta&:=\sum_{\lambda=1}^{M}\sqrt{\alpha_{\lambda}} \,.
\end{align*}
The value for $\rho$ given above is proved in Proposition \ref{prop:arcsin}.
\begin{rem}
   For $\jj\geq 0 $ in the weak coupling regime, we have $0\leq \tau < 1 $, so the expression $\arcsin(\tau) $ above is well defined. The correlation $\rho $ can assume any value in $[0,1)$. If the system is `close to strong coupling', in the sense that $J_{0}+(M-1)\bar{J} \nearrow 1 $, the correlation $\rho$ approaches $1 $.
\end{rem}

\begin{thm}
\label{thm:identical_pos_corr}Suppose the coupling matrix $\bJ(J_{0},\bar{J})$ is in the weak coupling regime and $0 \leq \bar{J} \leq J_0$. Then the
optimal weights are given by
\begin{align}\label{eq:friendly_world_opt_weight}
w_{\lambda}=D_{1}\sqrt{\alpha_{\lambda}}+D_{2}\eta
\end{align}
for each group $\lambda=1,\ldots,M$, where
\begin{align*}
D_{1} & =\big(1+\left(M-1\right)\rho\big)\left(1-J_{0}-\left(M-1\right)\bar{J}\right),\\
D_{2} & =\left(1+\left(M-2\right)\rho\right)\bar{J}-\rho\left(1-J_{0}\right).
\end{align*}
The coefficient $D_{1}$ is positive, and $D_{2}\geq0$ with equality
if and only if $\bar{J}=0$.
\end{thm}

\begin{proof}
The theorem is proved in Section \ref{subsec:Proof-thm-identical_pos_corr}.
\end{proof}

Note that the coefficients $D_1$ and $D_2$ only depend on the coupling matrix but not the relative sizes of the groups.

Theorem \ref{thm:identical_pos_corr} can be regarded as a generalisation of the square root law by Penrose to the case of weak dependence between groups.  The theorem states that the optimal weights are composed of a summand
proportional to the square root of the group's population and a constant
summand equal for all groups. The constant summand is 0 if and only
if the groups are independent. Thus, we recover the square root law
from \cite{SRL} for $\bar{J}=0$. For dependent voters across group
boundaries, there is no pure square root law. Instead, the optimal
weight is given by a term equal for each group and a term proportional
to the square root of the group's population. It is important to note that the dependence between voters in different groups is indeed the sole source of the constant term $D_2 \eta$ in the formula for the optimal weights.

We also contrast Theorem \ref{thm:identical_pos_corr} with the optimal weight under the collective bias model given in Theorem 21 in \cite{KT2021_CBM}. Said theorem gives the optimal weights in a similar setting as the friendly world under weak coupling for the MFM, where there is positive correlation between votes in different groups, but said correlation is not very strong. The optimal weights for each group $\lambda$ are of the form
\begin{align}\label{opt_CBM_non-tight}
w_{\lambda}=C_{1} \alpha_{\lambda}+C_{2}
\end{align}
with constants $C_1$ and $C_2$ that depend on the voting measure defining the collective bias model and the number of groups, but not on the size $\alpha_\lambda$ of group $\lambda$. The two prescriptions for optimal weights have in common that there is a constant term $D_2 \eta$ or $C_2$ which is equal for all groups. The presence of this constant term is owed entirely to the dependence between votes belonging to different groups (cf.\! the results for independent groups presented in Section \ref{Independent_Groups}), and it is a general feature of optimal weights for any voting measure not only the MFM and the collective bias model. The two prescriptions differ in the other summand: the collective bias model leads to optimal weights with one summand being proportional to the size of the group, whereas for the MFM the summand which depends on the group's size is proportional to the square root of the group's size. This feature distinguishes the two models from the point of view of the optimal weights in a two-tier voting systems. There is no regime of the MFM that produces a formula for optimal weights of the form \eqref{opt_CBM_non-tight} and no version of the collective bias model that produces a formula of the form \eqref{eq:friendly_world_opt_weight}.

\subsection{Hostile World}\label{subsec:hostile}

Now we consider a scenario  where all groups are antagonistic towards
each other. More precisely, we investigate an MFM with coupling matrix $\bJ(J_{0},\bar{J})$ such that $J_{0}>0$ but $\bar{J}\leq 0$. Note that the voters within each group are still positively correlated, as this is a general feature of the MFM.

Again, we suppose that the system is in the weak coupling regime; in other words, by Lemma \ref{lem:balancedregime},
\begin{align}\label{eq:host}
1-J_{0}+\bar{J}> 0 \,.
\end{align}

\begin{prop}\label{prop:rho}
   In the model $\bJ(J_{0},\bar{J})$ with $\bar{J}\leq 0$, we have in the weak coupling regime
\begin{align*}
   -\frac{1}{M-1}~<~\rho~=\lim_{N\to\infty}\IE(\chi_{1}\chi_{2})~\leq~0.
\end{align*}
\end{prop}
It may be surprising at first glance that $\rho $ is bounded from below away from the value $-1$, and in particular that the lower bound goes to $0$ if the number of groups $M$  goes to infinity. In a sense, the reason behind this phenomenon is the ancient
wisdom `the enemy of my enemy is my friend.' For example, if there are three groups, two of them must necessarily agree in a specific vote.

Proposition \ref{prop:rho} follows by `abstract' results on general exchangeable sequences (see e.g.\! \cite{Aldous}).
In Section \ref{subsec:rho}, we give a `concrete' proof of Proposition \ref{prop:rho}.

\begin{thm}
\label{thm:hostile_world}
For the model $\bJ(J_{0},\bar{J})$ with $\bar{J} \leq 0 < J_0$ in the weak coupling regime, the optimal weights are

\begin{equation}
w_{\lambda}~=~D_{1}\sqrt{\alpha_{\lambda}}\,+\,D_{2}\eta\label{eq:hostile_world_opt_weight}
\end{equation}
for each group $\lambda=1,\ldots,M$, where again
\begin{align*}
D_{1} & =\left(1+\left(M-1\right)\rho\right)\left(1-J_{0}+\left(M-1\right)\bar{J}\right),\\
D_{2} & =\left(1+\left(M-2\right)\rho\right)\bar{J}-\rho\left(1-J_{0}\right).
\end{align*}
Above the coefficient $D_{1}$ is positive, and $D_{2}\leq0$ with equality
if and only if $\bar{J}=0$.
\end{thm}
The proof is the same as for the `friendly world' scenario, see Section \ref{subsec:Proof-thm-identical_pos_corr}.

The expressions for the optimal weights in \eqref{eq:friendly_world_opt_weight} and in \eqref{eq:hostile_world_opt_weight} are the same. However, the sign of $\rho $ is positive in the first case and negative in the latter.

The optimal weights given by formula \eqref{eq:hostile_world_opt_weight}
are the sum of a term proportional to the square root of the group's
population and a \emph{negative} offset equal for all groups. This offset
is the product of a factor $D_{2}$ which depends on the coupling
matrix $J$ and a factor $\eta$ which depends on the distribution
of the groups' sizes. Very small groups may receive a negative weight,
whereas the largest groups always receive a positive weight.

Negative weights make sense in statistical estimation problems and for automated preference aggregation where the voting weights are not made public. For political practice, they are completely inappropriate. If a voter had a negative weight, they would choose to misrepresent their true preferences! In this case, it is impossible to achieve the theoretical minimum of the democracy deficit given by
\eqref{eq:LESM}.

A possible solution for the case of negative voting weights which result from solving the problem of optimal weights which minimise the democracy deficit is to consider a different criterion for `fair' voting weights. An example of such a criterion is the minimisation of the probability that the council makes a contrary decision to the popular vote. This is an avenue for future research.

As in the previous scenario, letting the groups be independent by setting
$\bar{J}=0$ reduces \eqref{eq:hostile_world_opt_weight} to the square
root law.

Negative optimal weights arise only under dependence of votes belonging to different groups. The article \cite{KT2021_CBM} contains an extensive analysis (see Section 9 of \cite{KT2021_CBM}) of the circumstances under which negative weights arise in the collective bias model.

\subsection{\label{subsec:Two-Clusters-high} Split World}

In this scenario, the world is split into two blocks or clusters. Let the clusters contain $M_{i},\, i=1,2$,
groups so that $M_{1}+M_{2}=M$. Without loss of generality, assume
that the cluster $C_{1}$ contains the first $M_{1}$ groups and $C_{2}$,
the last $M_{2}$. Let the coupling matrix have the block matrix form
\begin{equation}
\bJ=\left(\begin{array}{cc}
J^{1} & B\\
B^{T} & J^{2}
\end{array}\right),\label{eq:two_cluster_coupling}
\end{equation}
where $J^{i}\in\IR^{M_{i}\times M_{i}},i=1,2,$ are matrices of the form $\bJ(J_{0},\jj) $ of dimension $M_{i}$ with $\jj>0 $ and let
$B=-\bar{J}\,\boldsymbol{1}_{M_{1}\times M_{2}}$. We use the notation
$\boldsymbol{1}_{m\times n}$ to denote an $m\times n$ matrix with
all entries equal to 1.

Hence, voters belonging to the same group have a coupling of $J_{0}$,
voters in different groups of the same cluster are coupled positively
with strength $\bar{J}$, and voters belonging to groups in different clusters are coupled negatively with strength $-\bar{J}$.  According to Lemma \ref{lem:Matrix},
the matrix $\bJ $ is positive definite if $\jj < J_{0}$ and belongs to the weak coupling regime if $J_{0}+(M-1)\jj<1 $.

Let $\rho$ stand for the intra-cluster correlation $\lim_{N\to\infty}\IE\left(\chi_{1}\chi_{2}\right)$ (assuming $M_{1}\geq 2$)
and let $\bar{\eta}:=\sum_{\lambda\in C_{1}}\sqrt{\alpha_{\lambda}}-\sum_{\lambda\in C_{2}}\sqrt{\alpha_{\lambda}}$.

The optimal weights are as follows:
\begin{thm}
\label{thm:two_cluster_weights}For a coupling matrix $\bJ $ as in \eqref{eq:two_cluster_coupling}
with $0\leq \jj < J_{0}$ and $J_{0}+(M-1)\jj<1 $, the
optimal weights are given by
\begin{equation}
w_{\lambda}=D_{1}\sqrt{\alpha_{\lambda}}+\left\{
                                           \begin{array}{ll}
                                             D_{2}\;\bar{\eta}, & \hbox{for $\lambda\in C_{1} $,} \\
                                            - D_{2} \;\bar{\eta}, & \hbox{for $\lambda\in C_{2} $,}
                                           \end{array}
                                         \right.
\label{eq:two_cluster_weights}
\end{equation}
for each group $\lambda=1,\ldots,M$, where
\begin{align*}
D_{1} & =\left(1+\left(M-1\right)\rho\right)\left(1-J_{0}-\left(M-1\right)\bar{J}\right),\\
D_{2} & =\left(1+\left(M-2\right)\rho\right)\bar{J}-\rho\left(1-J_{0}\right).
\end{align*}
The coefficient $D_{1}$ is positive, and $D_{2}\geq0$ with equality
if and only if $\bar{J}=0$.
\end{thm}

\begin{proof}
The theorem is proved in Section \ref{subsec:Proof-thm_two_cluster}.
\end{proof}
The optimal weights have identical coefficients $D_{1}$ and $D_{2}$
to the scenarios discussed before.  However, instead of $\eta$, the sum of all $\sqrt{\alpha_{\lambda}}$,
we have $\bar{\eta}$, the difference between the sums of the $\sqrt{\alpha_{\lambda}}$
belonging to each cluster. As a rule, either $\bar{\eta}$ or
$-\bar{\eta}$ will be negative. Therefore, there are cases where
one or more groups are assigned a negative voting weight. This happens
when the term $\pm D_{2}\bar{\eta}$
is negative and larger in absolute value than $D_{1}\sqrt{\alpha_{\lambda}}$.

As we discussed in Section \ref{subsec:hostile}, negative weights are not acceptable for real life political systems.
If there are no groups small enough for a negative weight, then the
democracy deficit can be minimised as in the friendly world scenario. We
will say that $C_{1}$ is `larger' and `more uniformly sized'
than $C_{2}$ if $\bar{\eta}>0$, even though strictly speaking $\bar{\eta}>0$
can hold even if cluster 1 represents less than half the overall population.
By the formula \eqref{eq:two_cluster_weights}, groups belonging to
the larger of the two clusters receive a weight composed of the sum
of a term proportional to their population's square root, $D_{1}\sqrt{\alpha_{\lambda}}$,
and a constant term, $D_{2}\left|\bar{\eta}\right|$, equal for each
group in that cluster. The groups belonging to the smaller cluster
also receive a weight given by such a sum; however, the constant term
is $-D_{2}\left|\bar{\eta}\right|$. As in the previous scenarios, if the
groups are independent, then $D_{2}=0$, and we recover the square
root law. In addition to that, if $\bar{\eta}=0$, then the weights
are proportional to the square roots, even if the groups are not independent.
$\bar{\eta}=0$ can occur even if there are different numbers of groups
in each cluster and they represent different proportions of the overall
population.

\section{\label{sec:Optimal-Weights-Low}Optimal Weights for the Strong Coupling
Regime}

In the strong coupling regime, the coupling between voters induces
a pronounced tendency to vote alike. 
Contrary to the weak coupling regime, the optimal weights are not
necessarily uniquely determined. In fact, in many cases, the matrix $A=\lim_{N\to\infty}\left(\IE\left(\chi_{\nu}\chi_{\lambda}\right)\right)_{\nu,\lambda=1,\ldots,M}$ is singular so that
the limit of the linear system \eqref{eq:LESM} does not have a unique solution. We compute the matrix $A$ in Section \ref{strong_minimal_F} of the Appendix.

We next analyse the optimal weights in the three scenarios of the friendly world, the hostile world, and the split world, treated previously for the weak coupling regime in Section \ref{sec:weak}, under the assumption strong coupling.

\subsection{\label{subsec:Positive-Coupling-Low} Friendly World}

We start with coupling matrices $\bJ(J_{0},\jj)$ with $\jj >0 $ in the strong coupling regime. As discussed in
Section \ref{subsec:Basics}, the strong coupling regime is given by $J_{0}+(M-1)\jj>0 $. We also suppose that $\jj\leq J_{0}$ to ensure that
$\bJ(J_{0},\jj)$ is positive semi-definite.

Under this assumption, the matrix $A$ with $A_{\nu\lambda}=\lim_{N\to\infty}\IE(\chi_{\nu}\chi_{\lambda})$ is singular.

\begin{thm}\label{thm:rho1}
Suppose the coupling matrix $\bJ $ given by $\bJ(J_{0},\jj) $ with $\jj> 0$ is in the strong coupling regime. Then,
for all $\nu,\lambda $,
\begin{align}
   A_{\nu\lambda}&~=~\lim_{N\to\infty}\IE(\chi_{\nu}\chi_{\lambda})~=~1, \nonumber\\
b_{\lambda}&~=~\lim_{N\to\infty} \frac{1}{N}\,\IE\big(S\,\chi_{\lambda}\big)~
=~\sum_{\nu=1}^{M} \alpha_{\nu} \lim_{N\to\infty}\frac{1}{N_{\nu}}\,\IE\big(|S_{\nu}|\big)~=~\lim_{N\to\infty}\frac{1}{N}\,\IE\big(|S|\big).\label{eq:b}
\end{align}
\end{thm}
Recall that $S_{\lambda}=\sum_{i=1}^{N_{\lambda}}X_{\lambda i}$ and $S=\sum_{\lambda=1}^{M}S_{\nu} $.
The proof of this theorem can be found in Section \ref{subsec:Limit-Theorems}.

It follows that the asymptotically optimal weights are not uniquely determined:
\begin{thm}
\label{thm:pos_coupling_low}For a coupling matrix as in Theorem \ref{thm:rho1},
any $M$-tuple of positive weights is asymptotically optimal.
\end{thm}

\begin{proof}
This follows directly from the fact that $A=\boldsymbol{1}_{M\times M}$
and all entries of $b$ are equal.
\end{proof}
When voters of all groups are positively coupled, asymptotically,
the council votes will be almost surely unanimous by Proposition \ref{prop:orthants} and Theorem \ref{thm:rho1}.
Any distribution of weights among the groups gives rise to the same
council votes.

We contrast Theorem \ref{thm:pos_coupling_low} with a similar result for the collective bias model, Theorem 26 in \cite{KT2021_CBM}, which states that under a setup featuring strong correlation between votes belonging to different groups the optimal weights are indeterminate. This is a commonality between the MFM and the collective bias model: both admit scenarios where correlation between votes is so strong that the question of optimal weights becomes moot.

\subsection{\label{subsec:hostile_world_low} Hostile World}

The strong coupling regime in a hostile world leads to some complicated
yet interesting behaviour of the model.  If $M$ is even and all groups
are of the same size $\alpha_{\lambda}=1/M$, then there are $\left(\begin{array}{c}
M\\
M/2
\end{array}\right)$ points the vector of per capita voting margins $S_\lambda / N_\lambda$ assumes with positive (and equal) probability as $N$ goes to infinity (cf.\! Proposition \ref{prop:orthants}). Specifically,
these points are located in the orthants with precisely half the coordinates
positive and the other half negative. We can interpret this to mean
that, in a hostile world, we have ever-changing coalitions that maintain
the balance between the two alternatives being voted on. As all groups
are hostile toward each other, there are no permanent alliances as
in the other scenarios. As a consequence
of the shifting coalitions, the limit of the linear equation system \eqref{eq:LES}
has a unique solution as the matrix $\lim_{N\rightarrow \infty}A^N$ is not singular. The optimal
weights are given by $w=0$. As null weights lead to a council incapable
of reaching consensus on any proposal, this is yet another case where
in practice it is impossible to reach the minimal democracy deficit.

If $M$ is odd and the groups are of the same size, it is impossible
to achieve a perfect balance between the alternatives. Instead, the
closest possible approximation is realised, in which $\left(M+1\right)/2$
groups vote for one alternative and the rest vote for the other. Contrary
to $M$ even, here the optimal weights are unique and positive: $w_{\lambda}$ is proportional to $\frac{M+1}{M^{3}}$.
If we consider the limit of $M\rightarrow\infty$, the asymmetry disappears,
as a difference of one group in the council vote becomes insignificant.

The hostile world scenario illustrates that the optimal weights in
the strong coupling regime are uniquely determined in some cases aside from independent groups.

\subsection{\label{subsec:Two-Clusters-low} Split World}

Consider the coupling matrix with two clusters of groups first introduced
in Section \ref{subsec:Two-Clusters-high} with $J_{0}>\bar{J}$.
By Lemma \ref{lem:Matrix}, the strong coupling regime is equivalent to the condition $J_{0}+(M-1)\jj>1 $.

It follows from Proposition \ref{prop:orthants} that the vector of per capita voting margins
concentrates asymptotically in two orthants. However, contrary to the
friendly world scenario, these are not the positive and negative orthant. Rather, they are the two orthants where the coordinates belonging to each
cluster have the same sign and the two clusters are of opposite signs.

The optimal weights are not unique; however, contrary to the friendly world scenario with positive coupling between groups,
there is a condition on the total weight of the groups belonging
to each cluster:
\begin{thm}\label{thm:split_low}
\label{thm:two_cluster_weights-low}For a coupling matrix as in \eqref{eq:two_cluster_coupling} in the strong coupling regime, i.e.\!
with $J_{0}>\bar{J}>0$ and $J_{0}+(M-1)\jj>1 $,
any $M$-tuple of positive weights satisfying
\begin{equation}
\sum_{\lambda\in C_{1}}w_{\lambda}-\sum_{\lambda\in C_{2}}w_{\lambda}=\Theta\label{eq:two_clusters_weights_low}
\end{equation}
is optimal. The difference between the cluster weights $\Theta$ depends
on the parameters of the model.
\end{thm}

\begin{rem}
\label{rem:unique_minima}If the function $F$ defined in \eqref{eq:F}
has exactly two global minima, $m=\left(m_1,\ldots,m_M\right)$ located in the orthant with positive
coordinates $1,\ldots,M_{1}$ and negative coordinates $M_{1}+1,\ldots,M$,
and $-m$, then $\Theta=\sum_{\lambda\in C_{1}}\alpha_{\lambda}\left|m_{\lambda}\right|-\sum_{\lambda\in C_{2}}\alpha_{\lambda}\left|m_{\lambda}\right|$.
\end{rem}

\begin{proof}[Proof of Theorem \ref{thm:split_low}]
The statement follows from the observation that the matrix $A$ has block form
\[
A=\left(\begin{array}{cc}
\boldsymbol{1}_{M_{1}\times M_{1}} & -\boldsymbol{1}_{M_{1}\times M_{2}}\\
-\boldsymbol{1}_{M_{2}\times M_{1}} & \boldsymbol{1}_{M_{2}\times M_{2}}
\end{array}\right),
\]
and $b$ has identical entries for $\lambda \in C_{1}$ and the negative
of this value for $\lambda \in C_{2}$.
\end{proof}
The voters belonging to different clusters have a strong tendency
to vote opposite to each other. Asymptotically, the groups in cluster
1 will vote `yes' if and only if the groups in cluster 2 vote `no'
almost surely. Under the uniqueness assumption in Remark \ref{rem:unique_minima},
the absolute per capita voting margin $\IE\left(\left|S_{\lambda}\right|/N_{\lambda}\right)$
converges to the constant $m_{\lambda}$. Hence, we can interpret
$m_{\lambda}\in\left(0,1\right)$ as a measure of how large the typical
majority is, i.e.\! a measure of the cohesion within the group. As such, we
can interpret the terms $\sum_{\lambda\in C_{i}}\alpha_{\lambda}|m_{\lambda}|$
in the optimality condition \eqref{eq:two_clusters_weights_low} as
follows: a group contributes to the overall weight of its cluster
by being large and cohesive in its vote. Also, Theorem \ref{thm:two_cluster_weights-low}
makes no prescription as to how the joint weight of a cluster is to
be distributed among the groups. Similarly to the friendly world scenario, it is irrelevant how the weights are assigned among groups
that vote the same way almost surely.

\section{\label{sec:Independent-Clusters}Independent Clusters of Groups}

We have analysed both the weak and strong coupling regimes. The regime of the model determines the asymptotic behaviour
of the voting margins. In particular, it determines whether the group
voting margins are of order $\sqrt{N}$ or of order $N$. It is not possible
to have some voting margin $S_{\lambda}$ that grows like $\sqrt{N_{\lambda}}$
and some $S_{\nu}$ that behaves like $N_{\nu}$, unless the groups
are independent. If we posit $K\geq2$ clusters of groups which are
independent of each other, then these clusters can be in different
regimes. This assumption corresponds to a coupling matrix of block
form. Let $M_{1},\ldots,M_{K}$ be the number of groups in each cluster
$C_{1},\ldots,C_{K}$. Then the coupling matrix has the form
\begin{equation}
\mathbf{J}=\left(\begin{array}{cccc}
J^{1} & 0 & \ldots & 0\\
0 & J^{2} &  & \vdots\\
\vdots &  & \ddots & 0\\
0 & \ldots & 0 & J^{K}
\end{array}\right),\label{eq:indep_clusters_coupling}
\end{equation}
where $J^{i}$ is the $M_i\times M_i$ coupling matrix of cluster $C_{i}$. Let $A^{i}$
be $\left(\IE\left(\chi_{\lambda}\chi_{\nu}\right)\right)_{\lambda,\nu\in C_{i}}$
and $b^{i}=\left(b_{\lambda}\right)_{\lambda\in C_{i}}$. We will
call the $M_i$-vector of optimal weights for cluster $i$ $w^{i}$.
\begin{thm}
Let there be $K$ independent clusters with a coupling matrix as in
\eqref{eq:indep_clusters_coupling}. 
Then, for all clusters $i=1,\ldots,K$, the optimal weights for all
groups $\lambda\in C_{i}$ are given by the linear equation system
\[
A^{i}w^{i}=b^{i}.
\]
\end{thm}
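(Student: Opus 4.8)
The plan is to reduce the minimisation of the democracy deficit over all of $\IR^{M}$ to $K$ independent minimisation problems, one for each cluster, and then invoke the single-cluster analysis already carried out in Sections \ref{sec:Optimal-Weights-High} and \ref{sec:Optimal-Weights-Low}. First I would observe that, because the coupling matrix $J$ has the block-diagonal form \eqref{eq:indep_clusters_coupling}, the Hamiltonian \eqref{eq:Hamilton} splits as a sum $\mathbb{H}=\sum_{i=1}^{K}\mathbb{H}_{i}$, where $\mathbb{H}_{i}$ depends only on the spins $x_{\lambda j}$ with $\lambda\in C_{i}$. Consequently the mean-field measure \eqref{eq:Pbeta} factorises as a product $\IP=\bigotimes_{i=1}^{K}\IP_{i}$ over the clusters, and the normalisation constant factorises accordingly. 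In particular the family of random variables $\{\chi_{\lambda}:\lambda\in C_{i}\}$ is independent of $\{\chi_{\lambda}:\lambda\in C_{j}\}$ for $i\neq j$, and likewise $S=\sum_{i=1}^{K}S^{(i)}$ with $S^{(i)}:=\sum_{\lambda\in C_{i}}S_{\lambda}$, the summands being independent.

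Next I would feed this independence into the linear system \eqref{eq:LES}. For $\lambda\in C_{i}$ and $\nu\in C_{j}$ with $i\neq j$, independence and the symmetry \eqref{eq:symmetry} (which forces $\IE(\chi_{\nu})=0$) give $\IE(\chi_{\lambda}\chi_{\nu})=\IE(\chi_{\lambda})\IE(\chi_{\nu})=0$, so the matrix $A$ of \eqref{eq:A} is itself block-diagonal with diagonal blocks $A_{i}=(\IE(\chi_{\lambda}\chi_{\nu}))_{\lambda,\nu\in C_{i}}$. Similarly, for $\lambda\in C_{i}$ we have $\IE(\chi_{\lambda}S)=\sum_{j}\IE(\chi_{\lambda}S^{(j)})=\IE(\chi_{\lambda}S^{(i)})$, since for $j\neq i$ the variables $\chi_{\lambda}$ and $S^{(j)}$ are independent and each has mean zero by \eqref{eq:symmetry}. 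Hence $b$, defined in \eqref{eq:A}, decomposes into the blocks $b^{(i)}=\tfrac{1}{\sigma}(\IE(\chi_{\lambda}S^{(i)}))_{\lambda\in C_{i}}$. Because $A$ is block-diagonal, the global system $Aw=b$ is equivalent to the $K$ decoupled systems $A_{i}w^{(i)}=b^{(i)}$; and since $A$ is positive semi-definite by Proposition \ref{prop:posdef}, any solution of $Aw=b$ is a global minimiser of $\Delta_{\sigma}$ (the first-order conditions are necessary and sufficient for a convex quadratic), so the optimal weights within each cluster are exactly the solutions of $A_{i}w^{(i)}=b^{(i)}$.

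The remaining point is that the normalisation $\sigma$ must be chosen consistently for the whole system while still yielding, within each cluster, the finite nonzero $b^{(i)}$ predicted by the earlier sections. If every cluster is in the weak interaction regime, then each $S^{(i)}$ is of order $\sqrt{N}$ by the limit theorems of Section \ref{subsec:Limit-Theorems}, so $\sigma=\sqrt{N}$ is the right scaling and each block reproduces the weak-interaction computation. If at least one cluster is in the strong interaction regime, that cluster's voting margin is of order $N$, forcing $\sigma=N$; for the clusters that are in the weak regime the corresponding $b^{(i)}$ then vanishes asymptotically, so $w^{(i)}\to 0$, consistent with the statement $A_{i}w^{(i)}=b^{(i)}$ once $b^{(i)}$ is evaluated with this $\sigma$. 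I expect the only genuinely delicate step to be this bookkeeping of the scaling factor: one must check that the quantities $\IE(\chi_{\lambda}S^{(i)})/\sigma$ and $\IE(\chi_{\lambda}\chi_{\nu})$ appearing in the blocks converge to precisely the limits used in the single-cluster theorems, which is where the asymptotic results collected in Section \ref{sec:Appendix} are invoked; the factorisation of the measure and the resulting block structure of $A$ and $b$ are then routine.
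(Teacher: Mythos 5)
Your argument is correct and is exactly the one the paper intends (the paper in fact prints no proof for this theorem): the block-diagonal coupling matrix factorises the Gibbs measure over the clusters, which makes $A$ block-diagonal and $b$ block-decomposable, so the normal equations \eqref{eq:LES} decouple into the systems $A_{i}w^{(i)}=b^{(i)}$, and your bookkeeping of $\sigma$ matches the paper's subsequent corollary. The only cosmetic caveat is that $\IE(\chi_{\nu})=0$ holds only up to the atom at $S_{\nu}=0$ (i.e.\ asymptotically), which is the same level of approximation the paper uses throughout.
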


An immediate consequence of this theorem is that if there are two
independent clusters, the first in the weak coupling regime, the
second in the strong coupling regime, then the second cluster
will receive all the weight as the overall population goes to infinity.
\begin{cor}
Let $K=2$ and let the first cluster be in the weak and the second
in the strong coupling regime. Also assume that all entries of
$J^{2}$ are non-negative. Then the total weight of cluster $1$ is $O\left(1/\sqrt{N}\right)$,
and the total weight of cluster $2$ is a positive constant.
\end{cor}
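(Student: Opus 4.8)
The plan is to reduce everything to the two decoupled linear systems furnished by the preceding theorem and then to estimate the two clusters separately. Since cluster~$2$ lies in the strong interaction regime, that theorem applies with $\sigma=N$ and tells us that the optimal weights are exactly the vectors $w=(w^{(1)},w^{(2)})$ with $A_iw^{(i)}=b^{(i)}$ for $i=1,2$, where $b_\lambda=\tfrac1N\,\IE(\chi_\lambda S)$. First I would record that, because the coupling matrix \eqref{eq:indep_clusters_coupling} is block diagonal, the Hamiltonian \eqref{eq:Hamilton} splits as a sum over the two clusters, so $\IP$ is a product of a measure on cluster~$1$'s configurations and one on cluster~$2$'s, and in particular the two clusters are independent. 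Writing $S=S_{C_1}+S_{C_2}$ with $S_{C_i}:=\sum_{\lambda\in C_i}S_\lambda$ and using $\IE(S_{C_j})=0$ (each $\IE(X_{\nu i})=0$ by the symmetry \eqref{eq:symmetry}), for $\lambda\in C_i$ and $j\neq i$ one has $\IE(\chi_\lambda S_{C_j})=\IE(\chi_\lambda)\,\IE(S_{C_j})=0$, so $b_\lambda=\tfrac1N\,\IE(\chi_\lambda S_{C_i})$ sees only cluster~$i$.

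For cluster~$1$ I would show $b^{(1)}=O(N^{-1/2})$ and that $A_1^{-1}$ stays bounded. In the weak interaction regime the central limit theorem (Theorem~\ref{thm:CLT}) gives that the covariances $\IE(S_\mu S_{\mu'})$, $\mu,\mu'\in C_1$, are $O(N)$, hence $\IE(S_{C_1}^2)=\sum_{\mu,\mu'\in C_1}\IE(S_\mu S_{\mu'})=O(N)$ and, by Cauchy--Schwarz, $|b_\lambda|\le\tfrac1N\sqrt{\IE(S_{C_1}^2)}=O(N^{-1/2})$ for every $\lambda\in C_1$. By Proposition~\ref{prop:suffrand} the cluster-$1$ measure is sufficiently random in the limit, so by Proposition~\ref{prop:posdef} $A_1$ is positive definite for large $N$; since its entries $\IE(\chi_\lambda\chi_\nu)$ converge (again by Theorem~\ref{thm:CLT}), $A_1$ converges to a positive definite matrix and $\sup_N\|A_1^{-1}\|<\infty$. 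Therefore $w^{(1)}=A_1^{-1}b^{(1)}$ has $\|w^{(1)}\|=O(N^{-1/2})$, and in particular $\sum_{\lambda\in C_1}w_\lambda=O(1/\sqrt N)$.

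For cluster~$2$ I would use that all entries of $J_2$ are non-negative, so (as in Theorem~\ref{thm:pos_coupling_low} and the footnote there) all groups of $C_2$ are indirectly coupled and the law of large numbers for the strong regime (Theorem~\ref{thm:LLN_hom_low_temp} and its generalisation) gives that $(S_\nu/N_\nu)_{\nu\in C_2}$ concentrates on two antipodal points $m$ and $-m$, each with probability $\tfrac12$, with $m$ in the open positive orthant. Hence $\chi_\lambda\to+1$ on the first event and $\to-1$ on the second, simultaneously for all $\lambda\in C_2$, which forces $\IE(\chi_\lambda\chi_\nu)\to1$, i.e.\ $A_2\to\boldsymbol{1}_{M_{2}\times M_{2}}$. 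Writing $\tfrac1N\chi_\lambda S_{C_2}=\chi_\lambda\sum_{\nu\in C_2}\tfrac{N_\nu}{N}\tfrac{S_\nu}{N_\nu}$, on either event this tends to $c:=\sum_{\nu\in C_2}\alpha_\nu m_\nu>0$; since these random variables are bounded by $1$, bounded convergence yields $b_\lambda\to c$ for all $\lambda\in C_2$, i.e.\ $b^{(2)}\to c\,\boldsymbol{1}$. In the limit, then, $A_2w^{(2)}=b^{(2)}$ is the single scalar constraint $\sum_{\nu\in C_2}w_\nu=c$ (any distribution of this total among the $M_2$ groups being admissible), so the total weight of cluster~$2$ converges to the positive number $c$, which completes the argument.

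The step I expect to require the most care is the uniform invertibility of $A_1$: one must upgrade ``$A_1$ is positive definite for each finite $N$'' (Propositions~\ref{prop:suffrand} and~\ref{prop:posdef}) to ``$A_1$ converges to a positive definite limit'', so that $\|A_1^{-1}\|$ is bounded uniformly in $N$ and the $O(N^{-1/2})$ bound on $b^{(1)}$ actually transfers to $w^{(1)}$. A lesser but genuine point is that the identities $A_iw^{(i)}=b^{(i)}$ and the values of $A_i,b^{(i)}$ used above must be read in the $N\to\infty$ sense, and that for cluster~$2$ one needs the bounded-convergence step to pass from the distributional statement of Theorem~\ref{thm:LLN_hom_low_temp} to convergence of the entries of $A_2$ and $b^{(2)}$.
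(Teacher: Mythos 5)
Your argument is correct and follows exactly the route the paper intends: the paper states this corollary as an ``immediate consequence'' of the independent-clusters theorem without writing out a proof, and its surrounding discussion (the per capita absolute voting margin being of order $1/\sqrt{N}$ in the weak regime versus constant in the strong regime) is precisely the mechanism you make rigorous via $b^{(1)}=O(N^{-1/2})$ with $A_1^{-1}$ bounded, and $A_2\to\boldsymbol{1}_{M_2\times M_2}$, $b^{(2)}\to c\,\boldsymbol{1}$ with $c=\sum_{\nu\in C_2}\alpha_\nu m_\nu>0$. Your filled-in details (factorisation of the Gibbs measure over the block-diagonal $J$, the Cauchy--Schwarz bound, and the bounded-convergence step) are all consistent with the level of approximation the paper itself uses in Proposition \ref{prop:A_b_high}.
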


This corollary illustrates that clusters in the weak coupling regime,
whose voters interact loosely with each other, receive little weight
compared to clusters in the strong coupling regime. Why does this
happen? We have to think about whose opinion the representative of
a group represents in the council. The answer is they only represent
the difference in votes between the alternative that won, `yes'
or `no', and the alternative that lost, the absolute voting margin.
Hence, on average, the representatives cast their vote in the council
in the name of a number of people in their group that corresponds
to the expected absolute voting margin in their group. The expected
per capita absolute voting margin in the weak coupling regime behaves
like $1/\sqrt{N}$, whereas in the strong coupling regime it converges to a positive
constant as $N$ goes to infinity. That is the reason the strong coupling
regime representatives should receive more weight in the council:
they stand for more people in favour of or against the proposal.

We also see that if there is a cluster of a single group, then that
group will either have a weight proportional to the square root of
its population if it is in the weak coupling regime, or a weight
proportional to its population if it is in the strong coupling
regime. Hence, we recover the previous results found in \cite{SRL,KL1,Langner}.

\section{Conclusion\label{sec:conclusion}}

We used a multi-group MFM to study the problem of optimal weights which minimise the democracy deficit in a two-tier voting system. This model is a generalisation both of impartial culture and the classical single-group version of the MFM. It allows us to study correlated voting across group boundaries and how this correlation affects the optimal weights.

In Section \ref{sec:weak}, we studied the optimal weights under weak coupling between voters. The optimal weights are given by the sum of a constant independent of the group's size and a term proportional to the square root of each group's population. This result is a generalisation of Penrose's square root law. An interesting aspect is that the sign of the constant term in the formula for the optimal weights depends on the specific structure of the coupling matrix which describes coupling between voters belonging to different groups. In the friendly world scenario, where all groups interact positively with each other, the constant is positive. In other scenarios, such as  the hostile world, where all groups are antagonistic to each other, the constant is negative. As a direct consequence, very small groups are assigned a negative optimal weight. Finally, we saw in the split world scenario that the constant's sign can be different for different groups. In a split world, generally speaking, the groups belonging to one cluster will have a positive constant and the others, a negative one.

In Section \ref{sec:Optimal-Weights-Low}, we examined the optimal weights under strong coupling between voters. We found that in some cases, such as the friendly and split world scenarios, the optimal weights are indeterminate. In the hostile world scenario, however, optimal weights can be uniquely determined under some circumstances, rounding out the picture of the strong coupling regime, which differs considerably from the previous results obtained for independent groups.

Finally, we studied a scenario in Section \ref{sec:Independent-Clusters} with several independent clusters of groups. This generalises the independent groups case in the sense that here the independent sets of voters comprise more than just a single group each. We found that the optimal weights favour those clusters which feature strong coupling between their voters at the expense of clusters with weak coupling.

In our opinion, the results of this paper are interesting from a theoretical point of view, as they explore the impact of interaction among voters both inside their group and across group borders in two-tier voting systems. However, these results may also be of practical use in designing voting systems. A careful statistical analysis of correlations between voters may decide which model is appropriate in a given situation.

As a rule, systems with a long tradition of cooperation will presumably be better modelled by a collective bias voting model, while more loosely organised and more recently founded systems might behave more like an MFM. The former is most likely the case for federal states, like the US, in particular for the Electoral College. The latter model seems to be more suitable for a confederation of independent states like the EU, in particular for the Council of Ministers.

An alternative interpretation of the weak coupling regime with positive correlation between two groups and the scenario of a positive correlation in a collective bias model is that of order vs.\! disorder: in the weak coupling regime of the MFM, not only is there weaker positive correlation between the groups, but also the internal cohesion within each of the groups is much weaker. So the MFM would be more apt to model chaotic situations in which votes are close even within each group, whereas the collective bias model is better for situations with stable majorities describing more ordered situations.

\section*{Acknowledgments} We are grateful to the anonymous referees for many valuable suggestions.

\appendix

\section{\label{sec:Appendix}Appendix}

\subsection{\label{subsec:more_dem_def}More on the Democracy Deficit and the Optimal Weights}

A basic result concerning weighted voting systems is that if we multiply each weight by the same positive constant and keep
the relative quota $q$ fixed, we obtain an equivalent voting system.
If the weights $w_{\lambda}$ minimise the democracy deficit $\Delta_{1}$,
then the (equivalent) weights $w_{\lambda}/\sigma$ for any $\sigma>0$
minimise the `renormalised' democracy deficit $\Delta_{\sigma}$
defined by
\begin{align*}
\Delta_{\sigma}~=~\Delta_{\sigma}(v_{1},\ldots,v_{M})~:=~\mathbb{E}\left[\left(\frac{S}{\sigma}-\sum_{\lambda=1}^{M}v_{\lambda}\chi_{\lambda}\right)^{2}\right]\,.
\end{align*}

Recall that whenever we speak of the uniqueness of the vector of optimal weights,
it shall be understood to mean `uniqueness up to multiplication by
a positive constant.'

It is, therefore, irrelevant whether we minimise $\Delta_{1}$ or
$\Delta_{\sigma}$ as long as $\sigma>0$. In this article, we
compute optimal weights as $N$ tends to infinity. As a rule, in this
limit, the minimising weights for $\Delta_{1}$ will also tend to infinity.
It is therefore useful to minimise $\Delta_{\sigma}$ with an $N$-dependent
$\sigma$ to keep the weights bounded. For the MFM, the two possible
choices for $\sigma$ turn out to be $\sqrt{N}$ and $N$. Which
one of these is appropriate depends on the parameters of the model
(see Section \ref{subsec:Regimes}): in the weak coupling regime, we choose $\sigma = \sqrt{N}$ and in the strong coupling regime $\sigma = N$. Using this normalisation by $\sigma$ is how we obtain optimal weights that asymptotically converge to constants instead of diverging to infinity as the population goes to infinity. Thus, the formulas in Sections \ref{sec:weak} and
\ref{sec:Optimal-Weights-Low} feature the asymptotic relative group sizes $\alpha_\lambda$ instead of the absolute group sizes $N_\lambda$.

%

Recall the definition of the linear equation system \eqref{eq:LESM}. The matrix $A^N$ is invertible under rather
mild conditions.
\begin{defn}
\label{def:suffrand} We say that a voting measure $\IP$ on $\prod_{\lambda=1}^{M}\,\{-1,1\}^{N_{\lambda}}$
is \emph{sufficiently random} if
\begin{align}
\IP\,(\chi_{1}=s_{1},\ldots,\chi_{M}=s_{M})~>~0\qquad\text{for all }s_{1},\ldots,s_{M}\in\{-1,1\}.\label{eq:suffrand}
\end{align}
\end{defn}

Note that \eqref{eq:suffrand} is not very restrictive. For example,
if the support ${\rm supp\,\IP}$ of the voting measure $\IP$ is the whole
space $\{-1,1\}^{N}$, then $\IP$ satisfies \eqref{eq:suffrand}.
As a matter of fact, all versions of the MFM studied in this article
are sufficiently random for finite $N$. However, asymptotically
this property is lost in some cases, meaning the limiting distribution is no longer sufficiently random.
\begin{prop}
\label{prop:posdef} Let $\IP$ be a voting measure and let $A^N$ be
defined by \eqref{eq:A0}.
\begin{enumerate}
\item The matrix $A^N$ is positive semi-definite.
\item $A^N$ is positive definite if $\IP$ is sufficiently random.
\end{enumerate}
\end{prop}

\begin{proof}
This is Proposition 12 in \cite{KT2021_CBM}.
\end{proof}

The next theorem immediately follows from the previous proposition.
\begin{thm}
\label{thm:weights} If the voting measure $\IP$ is sufficiently
random, the optimal weights minimising the democracy deficit $\Delta_{\sigma}$
are unique and given by
\begin{align}
w^N~=~\left(A^N\right)^{-1}\,b^N.\label{eq:w}
\end{align}
\end{thm}

Although for finite $N$ typical voting measures are sufficiently random, including the MFM, the above result
is of rather limited usability as it is practically impossible to compute
the ingredients like $A^N=\IE(\chi_{\lambda}\chi_{\nu})$ and $\IE(S\chi_{\lambda})$
for finite (but fairly large) $N$.

In the following, we shall compute these quantities approximately
for $N\to\infty$ .
More precisely, for each $N=N_{1}+\cdots+N_{M}$, we define voting measures $\IP_{N}$ as well as
the derived quantities $A^{N}_{\nu\lambda}=\IE_{N}(\chi_{\nu}\chi_{\lambda})$, $(b^{N})_{\nu}=\frac{1}{\sigma}\IE(\chi_{\nu}S)$ and weights $(w^N_{\nu})_{\nu}$ and then evaluate their limits as $N\rightarrow \infty$.

In the following discussion, we assume that the limits $A:=\lim A^{N}$ and $b:=\lim b^{N}$ exist. This assumption is fulfilled in the models we discuss in this paper.

Even if the matrices $A^{N}$ are invertible for each $N$, the limit $A=\lim_{N\to\infty} A^{N}$ may be singular. If the limit matrix $A$ \emph{is} invertible, then the weights
\begin{align}
   w^{N}~&=~\left(A^{N}\right)^{-1}\,b^{N}\nonumber\\
\intertext{converge to}
   w~&=~A^{-1}\,b,\label{eq:winf}
\end{align}
the optimal weight for the limiting (i.e.\! large $N$) distribution of the model.
In these cases, we compute the weights $w$ and use them as approximations for the optimal weights $w^{N}$ for large $N$.

If the limit matrix $A$ is not invertible, then the equation
\begin{align*}
   A\,w~=~b
\end{align*}
has either no solution at all, or the solutions form a whole affine subspace $\mathcal{W}$ (of dimension at least $1$).
In the latter case, for all $w\in\mathcal{W}$,
\begin{align*}
   \left|A^{N}\,w- b^N\right|~\to~0\,.
\end{align*}
We shall then say that $\mathcal{W}$ consists of approximate solutions for large $N$. The (approximately) optimal weights are  \emph{not unique} in this case. In the cases considered in this article, $\mathcal{W}$ is typically of codimension $1$.

To compute optimal weights according to \eqref{eq:winf}, we have to evaluate the matrix $A$ given by
\begin{align}\label{eq:A}
   A_{\lambda\nu}~=~\lim_{N\to\infty}\,\IE\big(\chi_{\lambda}\chi_{\nu}\big)
\end{align}
as well as the inverse of $A$. We also need to compute
\begin{align}\label{eq:b0}
   b_{\nu}~=~\lim_{N\to\infty}\,\frac{1}{\sigma_{N}}\IE(\chi_{\nu}S)\,.
\end{align}

The quantities $\chi_{\nu}$ and $S$ depend on the voting margins
\begin{align}
\boldsymbol{S} & =\left(S_{1},\ldots,S_{M}\right)=\left(\sum_{i_{1}=1}^{N_{1}}X_{1i_{1}},\ldots,\sum_{i_{M}=1}^{N_{M}}X_{1i_{M}}\right)\label{eq:norm_sums_CWM}
\,.\end{align}
Therefore, we have to understand the large-$N$-behaviour of $\boldsymbol{S}$ in order to compute the limits \eqref{eq:A} and \eqref{eq:b0}.

\subsection{Regimes of the Mean-Field Model}\label{subsec:Regimes}

In this section, we will discuss the patterns of behaviour of the voting margins depending
on the parameters of the model, that is on the strength of the coupling between the voters.

Let for $x\in \IR^d$, $d\in \IN$, the symbol $\delta_x$ mean the Dirac measure at the point $x$. By $\mathcal{N}\left(0,\sigma^2\right)$ with $\sigma >0$, we refer to the normal distribution with mean 0 and variance $\sigma^2$, and the symbol $\dto $ means convergence in distribution as $N$ goes to infinity. For a positive definite matrix $C$, $\mathcal{N}(0,C)$ stands for the centred multivariate normal distribution with covariance matrix $C$.

It is well known that the single-group MFM defined in equation \eqref{eq:sCW} has a `phase transition' at $J=1$,
i.e.\! the behaviour of $\sum_i X_{i}$ is qualitatively different below and above this threshold.
More precisely, in the single group model we have that
\begin{align*}
   \frac{1}{N}\sum_{i=1}^{N}X_{i}~\dto~\delta_{0},\qquad
\frac{1}{\sqrt{N}}\sum_{i=1}^{N}X_{i}~\dto~\mathcal{N}\left(0,(1-J)^{-1}\right)
\end{align*}
as long as $J<1$, but
\begin{align*}
   \frac{1}{N}\sum_{i=1}^{N}X_{i}~\dto~\ed{2}\big(\delta_{-m}+\delta_{m}\big),
\end{align*}
with a $J$-dependent $m>0 $ for $J>1$
 (see e.g.\! \cite{Ellis} or \cite{K-MS} for a more elementary proof).

For the multi-group models introduced in Definition \ref{multi-group_MFM}, we define:
\begin{defn}\label{def:regimes}
 We say that the MFM with coupling matrix $\bJ $ is in the \emph{weak coupling regime}, if $\bJ<\bI$. Here $\bI $ is the $M\times M $-identity matrix and $\bJ<\bI$ means that $\bI-\bJ $ is positive definite.

We say that the MFM is in the \emph{critical regime} if $\bI-\bJ $ is positive semi-definite but not positive definite.

We say that the MFM is in the \emph{strong coupling regime} if $\bI-\bJ $ is not positive semi-definite.
\end{defn}
Of the three regimes which make up the parameter space of the MFM, the critical regime is by far the smallest. We will exclusively deal with the other two regimes which are of
more practical importance.

In partial analogy to the single group case, we have
\begin{thm}\label{thm:clt}
   Suppose $\bJ$ is either positive definite or $\bJ$ is a homogeneous coupling matrix (see Definition \ref{def:hom}).

If $\bJ$ is in the weak coupling regime, then
\begin{align*}
   &\left(\frac{S_1}{N_1},\ldots,\frac{S_M}{N_M}\right)~\dto~\delta_{0},&&
\left(\frac{S_1}{\sqrt{N_1}},\ldots,\frac{S_M}{\sqrt{N_M}}\right)~\dto~\mathcal{N}\left(0,(\mathbf{I}-\bJ)^{-1}\right).
\end{align*}

If $\bJ $ is a homogeneous coupling matrix and $\bJ $ is in the strong coupling regime, then
\begin{align}\label{eq:stronghom}
  \left(\frac{S_1}{N_1},\ldots,\frac{S_M}{N_M}\right)~\dto~\ed{2}\big(\delta_{-\mathbf{m}}+\delta_{\mathbf{m}}\big),
\end{align}
where $\mathbf{m}$ is an $M $-dimensional vector with strictly positive entries.
\end{thm}
For a proof of this result see \cite{Fedele}, \cite{Loewe}, or \cite{KT2021_CWM}.

We remark that the vector $\mathbf{m}$ has the form $\mathbf{m}=(m,m,\ldots,m)$, where $m$ is the positive solution of equation \eqref{eq:CW}.

Theorem \ref{thm:clt} has the following important consequence:
\begin{thm}\label{thm:invert}
 If $\bJ$ is in the weak coupling regime, then the limit
\begin{align*}
   A_{\nu\lambda}~:=~\lim_{N\to\infty}A^{N}_{\nu\lambda}~=~\lim_{N\to\infty}\IE(\chi_{\nu}\chi_{\lambda})
\end{align*}
exists, and the matrix $A$ is positive definite hence invertible.
\end{thm}
\begin{proof}
   We set
\begin{align*}
   \chi(x)~&:=~\left\{
                \begin{array}{ll}
                  1, & \hbox{if $x>0$,} \\
                  -1, & \hbox{otherwise,}
                \end{array}
              \right.
\intertext{and}
  \chi_{\nu}^{N}~&:=~\chi\left(\ed{\sqrt{N_{\nu}}}S_{\nu}\right).
\end{align*}
The matrix $\left(\mathcal{X}^{N}_{\nu\lambda}\right) \coloneq \left(\chi_{\nu}^{N}\chi_{\lambda}^{N}\right)$ is positive semi-definite and
we have $A^{N}=\IE\left(\mathcal{X}^{N}\right)$. Set $\mathcal{X}:=\lim_{N\rightarrow \infty} \mathcal{X}^{N}$.

By Theorem \ref{thm:clt}, the vectors $\chi^{N}=\left(\chi_{1}^{N},\ldots,\chi_{M}^{N}\right)$ converge in distribution to $\chi(Z)=\left(\chi(Z_{1}),\ldots,\chi(Z_{M})\right)$, where the distribution of $Z=(Z_{1},\ldots,Z_{M})$ is an $M$-dimensional centred normal distribution with covariance matrix $(\mathbf{I}-\bJ)^{-1}$.

So if $Q$ denotes the distribution of $\chi(Z) $, we have $\supp\,Q=\{ -1,1 \}^{M}$.

Now suppose that $\langle x,A x \rangle=\lim_{N\to\infty} \left< x,A^{N} x \right> = 0$. Then
\begin{align*}
   \IE\Big(\langle x, \mathcal{X} x \rangle\Big)=0.
\end{align*}
Since $\langle x, \mathcal{X} x \rangle \geq 0$ and the random matrices $\mathcal{X} $ and  $\chi(Z) \chi(Z)^T$ are identically distributed, it follows that
\begin{align*}
   \sum_{\nu=1}^{M} \chi(Z_{\nu})\, x_{\nu} = 0 \quad Q\textup{-almost surely.}
\end{align*}

Since $\{ -1,1 \}^{M}$ spans $\IR^{M}$, we obtain $x=0$. Thus,
the matrix $A$ is positive definite.

\end{proof}

The entries of the matrix $A$ can be expressed by the entries of the covariance matrix $C=\left(C_{\nu\lambda}\right)_{\nu,\lambda=1,\ldots,M}=(\mathbf{I}-\bJ)^{-1}$.

\begin{prop}\label{prop:arcsin}
   If $\bJ$ is in the weak coupling regime, then
\begin{align*}
   A_{\nu\lambda}~=~\IE(\chi_{\nu}\chi_{\lambda})~=~\frac{2}{\pi}\,\arcsin \left( \frac{C_{\nu\lambda}}{\sqrt{C_{\nu\nu}}\sqrt{C_{\lambda\lambda}}} \right) \,.
\end{align*}
In particular, $A_{\nu\nu}=1 $ and $-1<A_{\nu\lambda}<1$ for $\nu\not=\lambda $.
\end{prop}
Proposition \ref{prop:arcsin} is proved in Section \ref{ssec:linearS}.

The mathematical properties of the multi-group MFMs are intimately connected to the function
\begin{align}
   F(x)~:=~\frac{1}{2}\,x^{T}\,\sqrt{\alpha}\,\bJ^{-1}\,\sqrt{\alpha}\,x-\sum_{\lambda=1}^{M}\alpha_{\lambda}\ln\cosh x_{\lambda},\quad x\in\IR^{M}\,.\label{eq:F}
\end{align}
 In the formula above, the $M\times M$ matrix $\sqrt{\alpha}$
is diagonal with entries $\sqrt{\alpha_{\lambda}}$ on the diagonal.

We recall a few facts about this connection, details can be found in \cite{KT2021_CWM}.
By $P_{t}, t\in[-1,1] $, we denote the probability measure on $\{ -1,1 \} $ defined by
\begin{align}\label{prod_measure}
   P_{t}(1)~=~\ed{2}\,(1+t), \qquad P_{t}(-1)~=~\ed{2}\,(1-t)\,.
\end{align}
By $P_{t}^{\otimes n} $, we refer to the corresponding product measure on $\{ -1,1 \}^{n} $, and by $E_{t}^{\otimes n} $, to the associated expectation.

For a function $f:\prod_{\nu=1}^{M}\{ -1,1 \}^{N_{\nu}}\to \IR$, we define a function $\widetilde{f}:[-1,1]^{M}\to\IR $ by
\begin{align*}
   \widetilde{f}(x_{1},\ldots,x_{M})~:=~E_{x_{1}}^{\otimes N_{1}}\cdots E_{x_{M}}^{\otimes N_{M}}\Big(f\left(X_{11},\ldots,X_{1N_{1}},~\ldots~,X_{M1},\ldots,X_{MN_{M}}\right)\Big).
\end{align*}
Note that in the above formula the random variables $X_{\nu i}$ are independent \emph{with respect to} $\prod_{\nu=1}^{M} P_{x_{\nu}}^{\otimes N_{\nu}}$.
Now we set
\begin{align*}
   Z_{N}(f)~&:=~\int_{\IR^{M}}\,\widetilde{f}\big(\tanh\left(x_{1}\right),\ldots,\tanh\left(x_{M}\right)\big)\;e^{-NF(x)}\,\textup{d}x,\\
Z_{N}~&:=~\int_{\IR^{M}}\;e^{-NF(x)}\,\textup{d}x.
\end{align*}
Finally, we can evaluate expectations of the random variables of the MFM with positive definite coupling matrix $\bJ$ (see \cite{KT2021_CWM}):
\begin{align}\label{eq:hubstra}
   \IE_{\bJ}\big(f\left(X_{11},\ldots,X_{1N_{1}M},~\ldots~,X_{M1},\ldots,X_{MN_{M}}\right)\big)~\approx~\frac{1}{Z_{N}}\,Z_{N}(f)\quad\text{as } N\to\infty.
\end{align}
Above, we used the symbol `$\approx$' in the following sense:
\begin{defn}\label{def:appr}
Real-valued sequences $f_{N},g_{N}$ are called \emph{asymptotically equal} (as $N\to\infty$), in short $f_{N}\approx g_{N}$, if
 \begin{align*}
    \lim_{N\rightarrow\infty}\frac{f_{N}}{g_{N}}~=~1.
 \end{align*}
\end{defn}

In \cite{KT2021_CWM}, formula \eqref{eq:hubstra} was used to show Theorem \ref{thm:clt} by using Laplace's Theorem to evaluate
the right hand side of \eqref{eq:hubstra} asymptotically. Laplace's Theorem relates the behaviour of such integrals to the
minima of the function $F$. It turns out that the weak coupling regime is given by those $\bJ$ for which $F$ has a unique
non-degenerate minimum at $x=0$. In the strong coupling regime, $F$ assumes its minima away from the origin. Due to the symmetry of $F$, there are always at least two minima in this case.

To our knowledge, there is currently no general result concerning the minima of $F$ for the strong coupling regime. This is the reason why Theorem \ref{thm:clt} has results for the strong coupling regime only for homogeneous coupling matrices.

\subsection{\label{subsec:Matrix} Some Linear Algebra}

In this section, we analyse the eigenvalues of certain matrices that appear in the analysis of the MFM and the optimal weights. The knowledge of these eigenvalues leads to bounds on the parameters for positive definiteness of the coupling matrix $\bJ$ and the two regimes.
Suppose $a,b\in\IR$ and $M_{1}, M_{2}\in\IN$ with $M_{1}\geq 2, M_{2}\geq 0$, and set $M=M_{1}+M_{2}$.

We introduce the following notation for coupling matrices as in \eqref{eq:two_cluster_coupling}:
\begin{defn} \label{block_matrix}
   By $\bJ_{M_{1},M_{2}}(J_{0},\jj) $ we denote the matrix
\begin{align*}
   \bJ_{M_{1},M_{2}}(J_{0},\jj)_{\nu\lambda}~=~\left\{
                                    \begin{array}{ll}
                                      J_{0}, & \hbox{if $\nu=\lambda $,} \\
                                      \jj, & \hbox{if $\nu\not=\lambda$ and $\nu,\lambda\leq M_{1} $ ,} \\
                                      \jj, & \hbox{if $\nu\not=\lambda$ and $\nu,\lambda> M_{1} $,} \\
                                      -\jj, & \hbox{otherwise.}
                                    \end{array}
                                  \right.
\end{align*}
\end{defn}

We define the $M\times M$-matrix $A=\bJ_{M_{1},M_{2}}(b,a)$.

\begin{lem}\label{lem:Matrix}
   The matrix $A$ has eigenvalues $b+(M-1)a$ and $b-a$.
$A$ is positive definite if and only if
\begin{align*}
   a~<~b \qquad \text{and} \qquad -a~<~\ed{M-1}\;b.
\end{align*}
\end{lem}
\begin{proof}
   Define the vector $w$ by
\begin{align*}
   w_{i}~&=~\left\{
                   \begin{array}{ll}
                     1, & \hbox{for $i\leq M_{1}$,} \\
                     -1, & \hbox{for $i>M_{1}$.}
                   \end{array}
                 \right.
\end{align*}
Then $w$ is an eigenvector for the eigenvalue $b+(M-1)a$.

For $k\in \{1,\ldots, M_{1}-1, M_{1}+1,\ldots,M-1\}$, we set
\begin{align*}
   v^{k}_{i}~&=~\left\{
                   \begin{array}{ll}
                     1, & \hbox{for $i=k$,} \\
                     -1, & \hbox{for $i=k+1$,}\\
                      0, & \hbox{otherwise;}
                   \end{array}
                 \right.
\end{align*}
and for $k=M_{1}$, we set
\begin{align*}
   v^{M_{1}}_{i}~&=~\left\{
                   \begin{array}{ll}
                     1, & \hbox{for $i=M_{1}$,} \\
                     1, & \hbox{for $i=M_{1}+1$,}\\
                      0, & \hbox{otherwise.}
                   \end{array}
                 \right.
\end{align*}
The vectors $v^{k},\, k=1,\ldots ,M-1,$ are eigenvectors for the eigenvalue $b-a$.
\end{proof}

If $a\not=b$, then $b+(M-1)a$ is a simple eigenvalue and $b-a$ is $(M-1)$-fold degenerate.

We remark that the matrix $A$ can be written as
\begin{align*}
   A~=~(b-a)\,I\;+\;a\,|w\rangle\langle w|,
\end{align*}
where $|w\rangle\langle w|  $ denotes the orthogonal projection onto the vector $w$.

Next we calculate the inverse matrix of $A$.

\begin{lem}\label{lem:Ainv}
   If  $a\not=b$ and $-a\not=\ed{M-1}b$, then the matrix $A=\bJ_{M_{1},M_{2}}(b,a)$ is invertible and
\begin{align*}
   A^{-1}_{i j}~=~\ed{(b-a)(b+(M-1)\,a)\,}\;\left\{
                \begin{array}{ll}
                  b+(M-2)a, & \hbox{for $i=j$,} \\
                  -a, & \hbox{for $i\not=j$ and ($i,j\leq M_{1}$ or $i,j>M_{1}$}), \\
                  a, & \hbox{otherwise.}
                \end{array}
              \right.
\end{align*}

\end{lem}
\begin{proof}
   The proof is a lengthy but straightforward computation.
\end{proof}

The second type of matrix we deal with in Section \ref{sec:weak} is the balanced
coupling matrix $\bJ(b,a)$. Its positive definiteness is characterised
by
\begin{lem}\label{lem:balanced_matrix}
The matrix $\bJ(b,a)$ is positive definite if and only if
\[
a<b\quad and\quad-a<\frac{1}{M-1}b.
\]
\end{lem}

\begin{proof}
This lemma can be proved analogously to Lemma \ref{lem:Matrix}. The key observation
is that $\bJ(b,a)$ has the eigenvalues $b-a$ and $b+(M-1)a$.
\end{proof}

\subsection{Entries of the Linear Equation System \eqref{eq:LES}}\label{ssec:linearS}

In order to calculate the optimal weights, we first need the general
form of the entries in the matrix $A$ in \eqref{eq:A} and \eqref{eq:A0}.
\begin{lem}
\label{lem:entries_A}The entries of $A$ are $A_{\lambda\mu}\approx4\,\mathbb{P}\left(S_{\lambda},S_{\nu}>0\right)-1$
for all $\lambda,\mu=1,\ldots,M$.
\end{lem}

\begin{proof}
This is a straightforward calculation.
\end{proof}
We show that
\begin{prop}
\label{prop:A_b_high}Let $C=\left(C_{\kappa\nu}\right)_{\kappa,\nu=1,\ldots,M}=(\bI-\bJ)^{-1}$
be the covariance matrix defined in Theorem \ref{thm:clt}. In the
weak coupling regime, we have
\begin{enumerate}
\item $\mathbb{E}(\chi_{\kappa}\chi_{\nu})\approx\frac{2}{\pi}\arcsin\left(\frac{C_{\kappa\nu}}{\sqrt{C_{\kappa\kappa}}\sqrt{C_{\nu\nu}}}\right)$
for all $\kappa\neq\nu$,
\item $\mathbb{E}(\chi_{\kappa}S_{\kappa})\approx\sqrt{\frac{2C_{\kappa\kappa}}{\pi}N_{\kappa}}$
for all $\kappa$,
\item $\mathbb{E}(\chi_{\kappa}S_{\nu})\approx\sqrt{\frac{2}{\pi C_{\kappa\kappa}}N_{\nu}}C_{\kappa\nu}$
for all $\kappa\neq\nu$.
\end{enumerate}
\end{prop}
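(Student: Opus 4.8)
The plan is to reduce each of the three claims to a computation involving only the bivariate Gaussian limit supplied by Theorem~\ref{thm:CLT}, together with the observation from Lemma~\ref{lem:entries_A} that $\chi_\kappa$ is (asymptotically) the sign of $S_\kappa$. Write $T_\kappa := S_\kappa/\sqrt{N_\kappa}$, so that by Theorem~\ref{thm:CLT} the vector $(T_1,\ldots,T_M)$ converges in distribution to a centred Gaussian with covariance $C$. Since $\chi_\kappa = \operatorname{sgn}(S_\kappa) = \operatorname{sgn}(T_\kappa)$ up to the event $\{S_\kappa = 0\}$, which has vanishing probability in the limit, all the quantities $\IE(\chi_\kappa\chi_\nu)$, $\IE(\chi_\kappa S_\kappa)$, $\IE(\chi_\kappa S_\nu)$ are, after the appropriate $\sqrt{N}$-rescaling, continuous functionals of the pair $(T_\kappa,T_\nu)$ that are bounded by integrable envelopes; hence the limit of the expectation equals the expectation under the Gaussian law. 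The one point of care is uniform integrability: for the terms involving $S_\nu$ we must know that $\sup_N \IE(T_\nu^2)<\infty$, which follows from convergence of the second moments (the covariance matrix is finite), so $\{T_\nu\}$ is $L^1$-bounded and $\{\chi_\kappa T_\nu\}$ likewise, legitimising passage to the limit.

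For part~(1), with $(T_\kappa,T_\nu)$ jointly Gaussian, mean zero, variances $c_{\kappa\kappa},c_{\nu\nu}$ and covariance $c_{\kappa\nu}$, I would invoke the classical \emph{Sheppard / Stein orthant formula}: for a centred bivariate Gaussian with correlation coefficient $\rho = c_{\kappa\nu}/\sqrt{c_{\kappa\kappa}c_{\nu\nu}}$,
\[
\IE\bigl(\operatorname{sgn}(T_\kappa)\operatorname{sgn}(T_\nu)\bigr) = 1 - 4\cdot\frac{1}{2\pi}\arccos\rho = \frac{2}{\pi}\arcsin\rho ,
\]
equivalently $\IP(T_\kappa>0,T_\nu>0) = \tfrac14 + \tfrac{1}{2\pi}\arcsin\rho$, which is exactly the content of Lemma~\ref{lem:entries_A} rewritten. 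This gives $A_{\kappa\nu} \approx \tfrac{2}{\pi}\arcsin\bigl(c_{\kappa\nu}/\sqrt{c_{\kappa\kappa}c_{\nu\nu}}\bigr)$.

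For part~(2), $\IE(\chi_\kappa S_\kappa) = \sqrt{N_\kappa}\,\IE(\operatorname{sgn}(T_\kappa)T_\kappa) = \sqrt{N_\kappa}\,\IE|T_\kappa|$, and for a centred Gaussian of variance $c_{\kappa\kappa}$ one has $\IE|T_\kappa| = \sqrt{2c_{\kappa\kappa}/\pi}$, yielding the stated $\sqrt{2c_{\kappa\kappa}N_\kappa/\pi}$. For part~(3), $\IE(\chi_\kappa S_\nu) = \sqrt{N_\nu}\,\IE(\operatorname{sgn}(T_\kappa)T_\nu)$; conditioning $T_\nu$ on $T_\kappa$ (Gaussian regression) gives $\IE(T_\nu\mid T_\kappa) = (c_{\kappa\nu}/c_{\kappa\kappa})T_\kappa$, so $\IE(\operatorname{sgn}(T_\kappa)T_\nu) = (c_{\kappa\nu}/c_{\kappa\kappa})\IE|T_\kappa| = (c_{\kappa\nu}/c_{\kappa\kappa})\sqrt{2c_{\kappa\kappa}/\pi} = c_{\kappa\nu}\sqrt{2/(\pi c_{\kappa\kappa})}$, which after multiplying by $\sqrt{N_\nu}$ is the claimed expression. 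The main obstacle — really the only non-routine point — is justifying the exchange of limit and expectation for the unbounded functionals in (2) and (3); I would handle this by the uniform-integrability argument above (bounded second moments of $T_\nu$ imply uniform integrability of $\chi_\kappa T_\nu$), after which everything reduces to the three elementary Gaussian identities just listed.
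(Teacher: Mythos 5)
Your proposal is correct and follows essentially the same route as the paper: reduce to the bivariate Gaussian limit from Theorem~\ref{thm:CLT}, obtain part~(1) from the orthant probability $\IP(T_\kappa>0,T_\nu>0)=\tfrac14+\tfrac{1}{2\pi}\arcsin\rho$ (which the paper derives by polar coordinates where you cite it as Sheppard's formula), and obtain parts~(2) and~(3) from the Gaussian regression $\IE(T_\nu\mid T_\kappa)=(c_{\kappa\nu}/c_{\kappa\kappa})T_\kappa$. Your added remark on uniform integrability addresses a point the paper passes over silently with the symbol $\approx$, but otherwise the arguments coincide.
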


\begin{proof}
By Lemma \ref{lem:entries_A}, we have
\begin{align*}
\mathbb{E}(\chi_{\kappa}\chi_{\nu}) & \approx4\,\mathbb{P}\left(\frac{S_{\kappa}}{\sqrt{N_{\kappa}}},\frac{S_{\nu}}{\sqrt{N_{\nu}}}>0\right)-1.
\end{align*}
We need to calculate the two-dimensional marginal distribution of
$\left(\frac{S_{\kappa}}{\sqrt{N_{\kappa}}},\frac{S_{\nu}}{\sqrt{N_{\nu}}}\right)$.
This distribution is bivariate normal with mean 0 and covariance matrix
\[
\left(\begin{array}{cc}
C_{\kappa\kappa} & C_{\kappa\nu}\\
C_{\kappa\nu} & C_{\nu\nu}
\end{array}\right).
\]
For convenience sake, we set $X':=\frac{S_{\kappa}}{\sqrt{N_{\kappa}}}$
and $Y':=\frac{S_{\nu}}{\sqrt{N_{\nu}}}$. We standardise by dividing
by the standard deviations:
\begin{align*}
X & :=\frac{X'}{\sqrt{C_{\kappa\kappa}}},\quad Y:=\frac{Y'}{\sqrt{C_{\nu\nu}}},
\end{align*}
so that both $X$ and $Y$ have marginal standard normal distributions.
The correlation between them is given by
\begin{align*}
\rho & :=\mathbb{E}(XY)=\frac{\mathbb{E}(X'Y')}{\sqrt{C_{\kappa\kappa}}\sqrt{C_{\nu\nu}}}=\frac{C_{\kappa\nu}}{\sqrt{C_{\kappa\kappa}}\sqrt{C_{\nu\nu}}}.
\end{align*}
We set
\[
Z:=\frac{Y-\rho X}{\sqrt{1-\rho^{2}}}
\]
and note that $X$ and $Z$ are independent: $X$ and $Z$ are both
normal and their covariance is
\begin{align*}
\mathbb{E}(XZ) & =\frac{\mathbb{E}(XY)-\rho\mathbb{E}\left(X^{2}\right)}{\sqrt{1-\rho^{2}}}=\frac{\rho-\rho}{\sqrt{1-\rho^{2}}}=0.
\end{align*}
It is easily verified that the distribution of $Z$ is standard normal.
We let $\phi$ represent the density function of the standard normal
distribution and calculate
\begin{align*}
\mathbb{P}(X',Y'>0) & =\mathbb{P}(X,Y>0)=\mathbb{P}\left(X>0,Z>\frac{-\rho X}{\sqrt{1-\rho^{2}}}\right)\\
 & =\intop_{0}^{\infty}\phi(x)\intop_{\frac{-\rho x}{\sqrt{1-\rho^{2}}}}^{\infty}\phi(z)\,\text{d}z\,\text{d}x=\frac{1}{2\pi}\intop_{0}^{\infty}\intop_{\frac{-\rho x}{\sqrt{1-\rho^{2}}}}^{\infty}e^{-\frac{x^{2}+z^{2}}{2}}\,\text{d}z\,\text{d}x.
\end{align*}
We switch to polar coordinates and the last integral above equals
\begin{align*}
\frac{1}{2\pi}\intop_{0}^{\infty}\intop_{\arctan\frac{-\rho}{\sqrt{1-\rho^{2}}}}^{\pi/2}e^{-\frac{r^{2}}{2}}\,r\,\text{d}\varphi\,\text{d}r & =\frac{1}{4}+\frac{1}{2\pi}\arcsin(\rho).
\end{align*}
We next show the third result and note that the second one is a special
case of the third. We set $X:=\frac{S_{\kappa}}{\sqrt{N_{\kappa}}}$
and $Y:=\frac{S_{\nu}}{\sqrt{N_{\nu}}}$ and use the conditional expectation
\[
\mathbb{E}(Y|X)=\frac{C_{\kappa\nu}}{C_{\kappa\kappa}}X,
\]
which can be easily verified (for a proof see Chapter 4 of \cite{BT}).
Let $\textup{sgn}(x)$ stand for the sign of $x\in \IR$ and $\II A$, for any measurable set A, for the indicator function of $A$.
We are interested in $\mathbb{E}\left(\chi_{\kappa}\frac{S_{\nu}}{\sqrt{N_{\nu}}}\right)$,
which is equal to $\mathbb{E}(\text{sgn}(X)Y)$, therefore, we need
to calculate $\mathbb{E}(Y\II\{X>0\})$ and $\mathbb{E}(Y\II\{X<0\})$.
Their difference is the expectation we are looking for.
\begin{align*}
\mathbb{E}(Y\II\{X>0\}) & =\int\int\II\{X>0\}Y\mathbb{P}^{X,Y}(\text{d}x,\text{d}y)=\int\II\{X>0\}\int Y\mathbb{P}^{Y|X}(\text{d}y)\mathbb{P}^{X}(\text{d}x)\\
 & =\int\II\{X>0\}\mathbb{E}(Y|X=x)\mathbb{P}^{X}(\text{d}x)=\intop_{0}^{\infty}\frac{c_{\kappa\nu}}{C_{\kappa\kappa}}x\frac{1}{\sqrt{2\pi C_{\kappa\kappa}}}e^{-\frac{x^{2}}{2C_{\kappa\kappa}}}\text{d}x\\
 & =\frac{C_{\kappa\nu}}{\sqrt{2\pi C_{\kappa\kappa}}}.
\end{align*}
A very similar calculation yields
\[
\mathbb{E}(Y\II {\{X<0\}})=-\frac{C_{\kappa\nu}}{\sqrt{2\pi C_{\kappa\kappa}}}.
\]
Therefore, we have
\[
\mathbb{E}\left(\chi_{\kappa}\frac{S_{\nu}}{\sqrt{N_{\nu}}}\right)=\frac{\sqrt{2}C_{\kappa\nu}}{\sqrt{\pi C_{\kappa\kappa}}}.
\]
\end{proof}
\begin{cor}
Let $C=\left(C_{\kappa\nu}\right)_{\kappa,\nu=1,\ldots,M}$ be the
covariance matrix defined in Theorem \ref{thm:clt}. In the weak coupling
regime, the linear equation system \eqref{eq:LES} reads
\begin{equation*}
\left(\frac{2}{\pi}\arcsin\left(\frac{C_{\kappa\nu}}{\sqrt{C_{\kappa\kappa}}\sqrt{C_{\nu\nu}}}\right)\right)_{\kappa,\nu=1,\ldots,M}w=\sqrt{\frac{2}{\pi}}\left(\sqrt{C_{\kappa\kappa}}\sqrt{\alpha_{\kappa}}+\sum_{\nu\neq\kappa}\frac{C_{\kappa\nu}}{\sqrt{C_{\kappa\kappa}}}\sqrt{\alpha_{\nu}}\right)_{\kappa=1,\ldots,M}.
\end{equation*}
\end{cor}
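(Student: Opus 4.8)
The plan is to read off both sides of the matrix equation $Aw=b$ of \eqref{eq:LESM}--\eqref{eq:LES} from Proposition \ref{prop:A_b_high}, keeping in mind that in the weak interaction regime the normalisation is $\sigma=\sqrt{N}$. This is essentially an assembly step, so the proof will be short.

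First I would identify the matrix $A=\left(\IE(\chi_{\kappa}\chi_{\nu})\right)_{\kappa,\nu}$. For $\kappa\neq\nu$, part (1) of Proposition \ref{prop:A_b_high} gives $A_{\kappa\nu}\approx\frac{2}{\pi}\arcsin\frac{c_{\kappa\nu}}{\sqrt{c_{\kappa\kappa}}\sqrt{c_{\nu\nu}}}$. For the diagonal, since $\chi_{\kappa}\in\{-1,1\}$ we have $A_{\kappa\kappa}=\IE(\chi_{\kappa}^{2})=1$, and this agrees with the arcsin formula because $\frac{2}{\pi}\arcsin\frac{c_{\kappa\kappa}}{\sqrt{c_{\kappa\kappa}}\sqrt{c_{\kappa\kappa}}}=\frac{2}{\pi}\arcsin 1=1$. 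Hence the single expression $\left(\frac{2}{\pi}\arcsin\frac{c_{\kappa\nu}}{\sqrt{c_{\kappa\kappa}}\sqrt{c_{\nu\nu}}}\right)_{\kappa,\nu=1,\ldots,M}$ captures the whole matrix $A$, which is the left-hand side of the claimed system.

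Next I would compute $b_{\kappa}=\frac{1}{\sqrt{N}}\IE(\chi_{\kappa}S)$. By linearity of expectation and the definition $S=\sum_{\nu}S_{\nu}$, we have $b_{\kappa}=\frac{1}{\sqrt{N}}\sum_{\nu=1}^{M}\IE(\chi_{\kappa}S_{\nu})$. Splitting off the diagonal term via part (2), $\IE(\chi_{\kappa}S_{\kappa})\approx\sqrt{\tfrac{2c_{\kappa\kappa}}{\pi}N_{\kappa}}=\sqrt{\tfrac{2}{\pi}}\,\sqrt{c_{\kappa\kappa}}\,\sqrt{N_{\kappa}}$, and using part (3), $\IE(\chi_{\kappa}S_{\nu})\approx\sqrt{\tfrac{2}{\pi c_{\kappa\kappa}}N_{\nu}}\,c_{\kappa\nu}=\sqrt{\tfrac{2}{\pi}}\,\tfrac{c_{\kappa\nu}}{\sqrt{c_{\kappa\kappa}}}\sqrt{N_{\nu}}$ for $\nu\neq\kappa$ (equivalently, part (2) is the $\nu=\kappa$ instance of part (3)), dividing by $\sqrt{N}$ yields
\[
b_{\kappa}\approx\sqrt{\tfrac{2}{\pi}}\left(\sqrt{c_{\kappa\kappa}}\,\sqrt{\tfrac{N_{\kappa}}{N}}+\sum_{\nu\neq\kappa}\frac{c_{\kappa\nu}}{\sqrt{c_{\kappa\kappa}}}\sqrt{\tfrac{N_{\nu}}{N}}\right).
\]
Letting $N\to\infty$ and invoking Definition \ref{def:group_sizes} to replace $N_{\nu}/N$ by $\alpha_{\nu}$ gives the right-hand side of the asserted equation, completing the identification of \eqref{eq:LES} with \eqref{eq:LES_high}.

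There is no genuine obstacle; the statement is a direct corollary of Proposition \ref{prop:A_b_high}. The only points that merit a line of care are the verification that the diagonal entries of $A$ are consistent with the arcsin formula through $\arcsin 1=\pi/2$, and being explicit that the asymptotic relations $\approx$ of Proposition \ref{prop:A_b_high} combine with the limit $N_{\nu}/N\to\alpha_{\nu}$ in the same $N\to\infty$ sense, so that the final right-hand side is indeed the limiting form of $b$.
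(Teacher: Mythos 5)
Your proof is correct and matches the paper's (implicit) argument: the corollary is stated as a direct assembly of Proposition \ref{prop:A_b_high}, and the same decomposition $b_{\kappa}=\frac{1}{\sqrt{N}}\sum_{\nu}\IE(\chi_{\kappa}S_{\nu})$ with $N_{\nu}/N\to\alpha_{\nu}$ is exactly what appears in the paper's subsequent computations of $b_{\lambda}$ in the proofs of Theorems \ref{thm:weights_hom_high} and \ref{thm:identical_pos_corr}. Your explicit check that the diagonal entries $A_{\kappa\kappa}=1$ are consistent with the arcsin formula is a sensible added detail the paper leaves tacit.
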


We next show that the linear equation system \eqref{eq:LES} has a
unique solution in the weak coupling regime.
\begin{prop}
\label{prop:nonsingular}The matrix $A=\lim_{N\to\infty}\mathbb{E}(\chi_{\kappa}\chi_{\nu})_{\kappa,\nu=1,\ldots,M}$
is non-singular in the weak coupling regime.
\end{prop}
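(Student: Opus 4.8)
The plan is to show that the (asymptotic) coefficient matrix appearing in \eqref{eq:LES_high},
\[
\tilde A := \left(\tfrac{2}{\pi}\arcsin\rho_{\kappa\nu}\right)_{\kappa,\nu=1,\ldots,M},
\qquad \rho_{\kappa\nu} := \frac{c_{\kappa\nu}}{\sqrt{c_{\kappa\kappa}}\,\sqrt{c_{\nu\nu}}},
\]
is in fact positive definite, which in particular yields non-singularity. Here $C=(c_{\kappa\nu})$ is the limiting covariance matrix of Theorem \ref{thm:CLT}; note $\rho_{\kappa\kappa}=1$ and $|\rho_{\kappa\nu}|\le 1$, consistent with $\tfrac{2}{\pi}\arcsin 1=1=\IE(\chi_\kappa^2)$.

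First I would record that the correlation matrix $R:=(\rho_{\kappa\nu})$ is positive definite. Indeed $R=D^{-1/2}CD^{-1/2}$ with $D:=\operatorname{diag}(c_{11},\ldots,c_{MM})$ and all $c_{\kappa\kappa}>0$, so it suffices that $C$ be positive definite. This holds throughout the weak interaction regime by Theorem \ref{thm:CLT}: for homogeneous coupling $C=I+\Sigma$ with $\Sigma=\big(\tfrac{\beta}{1-M\beta}\big)_{\lambda\mu}$ positive semi-definite because $\beta<1/M$; for heterogeneous coupling $C=(I-J)^{-1}$ is positive definite because $I-J$ is.

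The key step is the classical fact that applying $\arcsin$ entrywise preserves positive definiteness of a correlation matrix. I would use the power series
\[
\arcsin x=\sum_{k=0}^{\infty}\frac{(2k)!}{4^{k}(k!)^{2}(2k+1)}\,x^{2k+1},\qquad |x|\le 1,
\]
all of whose coefficients $a_{k}$ are non-negative, to write $\tilde A=\tfrac{2}{\pi}\sum_{k\ge 0}a_{k}\,R^{\circ(2k+1)}$, where $R^{\circ m}$ is the $m$-th Hadamard power and the series converges entrywise since every entry of $R$ lies in $[-1,1]$. By the Schur product theorem each $R^{\circ m}$ is positive semi-definite, while the $k=0$ term equals $\tfrac{2}{\pi}R$, which is positive definite. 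Hence for every $x\in\IR^{M}\setminus\{0\}$ we have $x^{T}\tilde A x=\tfrac{2}{\pi}\sum_{k\ge 0}a_{k}\,x^{T}R^{\circ(2k+1)}x>0$, a convergent sum of non-negative terms with strictly positive $k=0$ term. Thus $\tilde A$ is positive definite, hence non-singular.

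I do not expect a genuine obstacle. The only points requiring care are unwinding Theorem \ref{thm:CLT} to confirm positive definiteness of $C$ in both coupling regimes, and justifying termwise evaluation of the quadratic form on the entrywise-convergent series — which is immediate as above. (Alternatively, non-singularity also follows by combining Propositions \ref{prop:suffrand} and \ref{prop:posdef}; but the argument sketched here additionally identifies $\tilde A$ as positive definite, i.e.\ verifies the Hessian criterion ensuring that the solution of \eqref{eq:LES_high} is a genuine minimiser of the democracy deficit.)
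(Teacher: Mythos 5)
Your proof is correct, but it takes a genuinely different route from the paper. The paper's own argument is two lines: since $C$ is positive definite in the weak interaction regime, the limiting Gaussian is non-degenerate, so the limiting distribution is sufficiently random in the sense of Definition \ref{def:suffrand} (this is Proposition \ref{prop:suffrand}), and Proposition \ref{prop:posdef} then gives positive definiteness, hence non-singularity, of $A$ --- exactly the alternative you mention in your closing parenthesis. You instead work directly with the explicit asymptotic representation $A_{\kappa\nu}\approx\frac{2}{\pi}\arcsin\rho_{\kappa\nu}$ from Proposition \ref{prop:A_b_high} and invoke the classical fact that the entrywise $\arcsin$ of a positive definite correlation matrix is positive definite, via the non-negative Taylor coefficients of $\arcsin$ and the Schur product theorem; the termwise evaluation of the quadratic form is justified since the series converges absolutely on $[-1,1]$ and the quadratic form involves only finitely many entries. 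What each approach buys: the paper's is shorter and works at the level of the measure, but it leans on Proposition \ref{prop:suffrand}, which is stated without proof, and on a result imported from \cite{KT2021_CBM}; yours is self-contained given Theorem \ref{thm:CLT} and Proposition \ref{prop:A_b_high}, is purely linear-algebraic, and makes the positive definiteness of the Hessian (hence the minimiser property of the solution of \eqref{eq:LES_high}) completely explicit. Note only that both routes ultimately deliver positive definiteness, not just non-singularity, so your claim of an ``additional'' payoff over the sufficiently-random argument is a slight overstatement; the real gain is in transparency and self-containment. One small point of care you handle correctly: the positive definiteness of $C$ must be checked separately in the homogeneous case, where $C=I+\Sigma$ with $\Sigma$ a positive semi-definite rank-one perturbation for $0\le\beta<1/M$, since the homogeneous model is not a special case of the heterogeneous one.
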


\begin{proof}
The covariance matrix $C=\bI-\bJ$ is positive definite. Thus, the limiting
distribution is sufficiently random, and by Proposition \ref{prop:posdef}
the claim follows.
\end{proof}

\subsection{\label{subsec:rho}Proof of Proposition \ref{prop:rho}}
Observe that in this case $\rho=\lim_{N\to\infty}\IE(\chi_{1}\chi_{2})\leq 0$, since $(1-J_{0})>0$. Moreover, $\rho>-\frac{2}{\pi}\arcsin\left(\frac{1}{M-1}\right) $, since
\begin{align*}
   \tau~&=~\frac{\bar{J}}{1-J_{0}-(M-2)\bar{J}}~
=~\frac{\bar{J}}{(1-J_{0}+\bar{J})-(M-1)\bar{J}}\\
&>~\frac{\bar{J}}{-(M-1)\bar{J}}~=~-\frac{1}{M-1}.
\end{align*}
Proposition \ref{prop:rho} then follows from Lemma \ref{lem:sin_id}.

\subsection{\label{subsec:Proof-thm-identical_pos_corr}Proof of Theorem \ref{thm:identical_pos_corr} and Theorem \ref{thm:hostile_world}}

By Theorem \ref{thm:clt}, the covariance matrix $C$ of the normalised
voting margins is $\left(\mathbf{I}-\bJ\right)^{-1}$. We invert the matrix $\bI-\bJ$ (see Lemma \ref{lem:Ainv}) and
obtain
\[
C=\left(C_{\lambda\nu}\right)_{\lambda,\nu=1,\ldots,M}=\frac{1}{D_{I-J}}\cdot\begin{cases}
1-J_{0}-\left(M-2\right)\bar{J}, & \lambda=\nu,\\
\bar{J}, & \lambda\neq\nu,
\end{cases}
\]
where the constant $D_{I-J}>0$ will not play an important
role in the calculation of the optimal weights. We also note that
all diagonal entries are equal and so are all off-diagonal entries.

Next, we calculate the entries of the linear equation system \eqref{eq:LES}
using the results from Theorem \ref{prop:A_b_high}. The entries of
matrix $A$ have the form
\[
\left(A\right)_{\lambda\nu}=\begin{cases}
1, & \lambda=\nu,\\
\frac{2}{\pi}\arcsin\left(\frac{\bar{J}}{1-J_{0}-\left(M-2\right)\bar{J}}\right), & \lambda\neq\nu.
\end{cases}
\]
We set $\rho$ equal to the off-diagonal
entries of $A$, $\rho:=\frac{2}{\pi}\arcsin\left(\frac{\bar{J}}{1-J_{0}-\left(M-2\right)\bar{J}}\right)$. The entries of the vector $b$ are given by
\begin{align*}
b_{\lambda} & =\IE\left(\chi_{\lambda}\frac{S}{\sqrt{N}}\right)\approx\IE\left(\chi_{\lambda}\frac{S_{\lambda}}{\sqrt{N_{\lambda}}}\right)\sqrt{\alpha_{\lambda}}+\sum_{\nu\neq\lambda}\IE\left(\chi_{\lambda}\frac{S_{\nu}}{\sqrt{N_{\nu}}}\right)\sqrt{\alpha_{\nu}}\\
 & =\sqrt{\frac{2}{\pi C_{\lambda\lambda}}}\left[C_{\lambda\lambda}\sqrt{\alpha_{\lambda}}+\sum_{\nu\neq\lambda}C_{\nu\lambda}\sqrt{\alpha_{\nu}}\right]\propto\left(1-J_{0}-\left(M-1\right)\bar{J}\right)\sqrt{\alpha_{\lambda}}+\bar{J}\eta,
\end{align*}
where $\eta$ is as defined in Section \ref{subsec:Positive-Coupling-High}.
We dropped the multiplicative constant which is identical for all
$\lambda$.

We invert the matrix $A$,
\[
\left(A^{-1}\right)_{\lambda\nu}=\frac{1}{D_{A}}\cdot\begin{cases}
1+\left(M-2\right)a, & \lambda=\nu,\\
-a, & \lambda\neq\nu,
\end{cases}
\]
and proceed to calculate the optimal weights. Dropping common multiplicative
constants and simplifying,
\begin{align*}
w_{\lambda} & =\left(A^{-1}b\right)_{\lambda}\propto\left(1+\left(M-2\right)\rho\right)b_{\lambda}-\rho\sum_{\nu\neq\lambda}b_{\nu}\\
 & =\left(1+\left(M-1\right)\rho\right)\left(1-J_{0}-\left(M-1\right)\bar{J}\right)\sqrt{\alpha_{\lambda}}
+\left[\left(1+\left(M-2\right)a\right)\bar{J}-\rho\left(1-J_{0}\right)\right]\eta.
\end{align*}

The positivity of $D_{1}$ follows immediately, since the second factor
$1-J_{0}-\left(M-1\right)\bar{J}$ is positive in the weak coupling
regime as shown previously. As for $D_{2}$, the inequality $D_{2}\geq0$
is equivalent to
\[
\rho\leq\frac{\bar{J}}{1-J_{0}-\left(M-2\right)\bar{J}},
\]
and thus the claim follows from the following

\begin{lem}
\label{lem:sin_id}For all $x\in\left[0,1\right]$, the inequality
$x\leq\sin\left(\frac{\pi}{2}x\right)$ is satisfied. It holds with
equality if and only if $x\in\left\{ 0,1\right\} $.
\end{lem}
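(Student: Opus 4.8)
The plan is to exploit strict concavity of the sine function on $[0,1]$. Set $h(x):=\sin\!\left(\tfrac{\pi}{2}x\right)-x$ for $x\in[0,1]$; the claim is equivalent to $h(x)\ge 0$ on $[0,1]$ with equality precisely at the endpoints. First I would record the boundary values $h(0)=0$ and $h(1)=\sin(\pi/2)-1=0$. Then, differentiating twice, $h''(x)=-\tfrac{\pi^{2}}{4}\sin\!\left(\tfrac{\pi}{2}x\right)$, which is $\le 0$ on $[0,1]$ and strictly negative on $(0,1)$; hence $h$ is strictly concave on $[0,1]$.

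Next I would invoke the elementary fact that a strictly concave function vanishing at both endpoints of an interval is strictly positive in the interior. Concretely, for $x\in(0,1)$ write $x=(1-x)\cdot 0+x\cdot 1$ and apply the definition of strict concavity to get $h(x)>(1-x)h(0)+x\,h(1)=0$. Combined with $h(0)=h(1)=0$, this gives $x\le\sin\!\left(\tfrac{\pi}{2}x\right)$ for all $x\in[0,1]$, with equality exactly for $x\in\{0,1\}$, which is the assertion.

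There is essentially no real obstacle here; the only point needing a little care is the strictness of the inequality in the interior, which is why I reduce matters to \emph{strict} concavity (the bound $h''\le 0$ alone would only yield the non-strict inequality). If one prefers to avoid quoting the general statement about concave functions, an equivalent route is to observe that $h'(x)=\tfrac{\pi}{2}\cos\!\left(\tfrac{\pi}{2}x\right)-1$ is strictly decreasing on $[0,1]$, with $h'(0)=\tfrac{\pi}{2}-1>0$ and $h'(1)=-1<0$; hence $h$ strictly increases then strictly decreases, and since it vanishes at both endpoints it is strictly positive on $(0,1)$. Either way the lemma follows.
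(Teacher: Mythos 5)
Your proof is correct and rests on the same ingredients as the paper's: the boundary values $h(0)=h(1)=0$ and the sign of $h''(x)=-\tfrac{\pi^{2}}{4}\sin\bigl(\tfrac{\pi}{2}x\bigr)$. Your primary route (strict concavity plus vanishing at the endpoints) is a marginally cleaner packaging of the same argument, and the alternative you sketch at the end --- $h'$ strictly decreasing with $h'(0)>0$, $h'(1)<0$, so $h$ increases then decreases --- is precisely the proof given in the paper.
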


\begin{proof}
Set
\[
f(x):=\sin\left(\frac{\pi}{2}x\right)-x.
\]
The function $f$ has the values $f(0)=f(1)=0$,
$f''(x)=-\frac{\pi^{2}}{4}\sin\left(\frac{\pi}{2}x\right)$. So $f$ is concave on $[0,1] $ and strictly concave on $(0,1) $.
As a consequence, $f(x)\geq 0$ holds on $[0,1] $.
\end{proof}

\subsection{\label{subsec:Proof-thm_two_cluster}Proof of Theorem \ref{thm:two_cluster_weights}}

The proof of this theorem proceeds along the same lines as that of
Theorem \ref{thm:identical_pos_corr}. The main difference is the
inversion of the coupling matrix $\mathbf{I}-\mathbf{J}$. In the following, let $I$ stand for the identity matrix whose dimensions should be clear from the context. The block matrix form allows
us to calculate its inverse using the Schur complement formula
\begin{align*}
\left(\mathbf{I}-\mathbf{J}\right)^{-1} & =\left(\begin{array}{cc}
I-J^{1} & -B\\
-B^{T} & I-J^{2}
\end{array}\right)^{-1}\\
 & =\left(\begin{array}{cc}
\left(I-J^{1}-B\left(I-J^{2}\right)^{-1}B^{T}\right)^{-1} & 0\\
0 & \left(I-J^{2}-B^{T}\left(I-J^{1}\right)^{-1}B\right)^{-1}
\end{array}\right)\\
& \quad \; \cdot \left(\begin{array}{cc}
I & B\left(J^{2}\right)^{-1}\\
B^{T}\left(J^{1}\right)^{-1} & I
\end{array}\right)
\end{align*}
since the matrices $I-J^{i}$ are both invertible in the weak coupling
regime. After lengthy but straightforward calculations, we obtain $C:=(\bI-\bJ)^{-1}=$
\[
C_{\lambda\nu}=\frac{1}{D_{I-J}}\cdot\begin{cases}
1-J_{0}-\left(M-2\right)\bar{J}, & \lambda=\nu,\\
\bar{J}, & \lambda,\nu\in C_{i},i=1,2,\\
-\bar{J}, & \lambda\in C_{i},\nu\notin C_{i},i=1,2.
\end{cases}
\]

Afterwards, the calculation of the optimal weights proceeds along
the same lines as in previous proofs, carefully keeping track of
the signs of the terms.

\subsection{Basic Results on Strong Coupling}\label{strong_minimal_F}

In the strong coupling regime, the matrix $A=\lim_{N\to\infty}\left(\IE\left(\chi_{\nu}\chi_{\lambda}\right)\right)_{\nu,\lambda=1,\ldots,M}$ is singular so that
the limit of the linear system \eqref{eq:LESM} does not have a unique solution. To compute the matrix $A$, we need to find the minima of the function $F$ defined in \eqref{eq:F}. These minima also determine the limiting distribution of the vector of normalised group voting margins (see Theorem \ref{thm:clt}). Since $F$ is continuous and $F(x)\to\infty$ as $\|x\|\to\infty $, the function $F$ does have minima. In the
weak coupling regime, the origin is the unique minimum of $F$. In the strong coupling regime, $0$ is \emph{not} a minimum
of $F$ and all minima come in pairs $x_{0}$ and $-x_{0} $. For the case of {independent} groups of voters, i.e.\! for diagonal $\bJ $, there is a unique minimum of $F$ in each orthant $\mathcal{O}^{\xi}=\{ x\in\IR^{M}\mid x_{\lambda}\,\xi_{\lambda}>0,\, \lambda=1,\ldots,M\}$, where $\xi\in\{ -1,1 \}^{M} $. For \emph{homogeneous} coupling matrices $\bJ=\bJ(J_{0},J_{0}) $, there is a unique pair $x_{0},-x_{0} $ of minima with $x_{0}\in\mathcal{O}^{+}:=\{ x\mid x_{\lambda}>0 \text{ for all }\lambda\}$. In fact, this vector $x_{0}$ has the form
$x_{0}=(m,m,\ldots,m) $, where $m$ satisfies
\begin{equation}
\sum_{\lambda=1}^M \tanh\left(\frac{J_{0}}{\sqrt{\alpha_{\lambda}}}m\right)=m.\label{eq:CW}
\end{equation}
This observation leads to the limit Theorem \ref{thm:clt} (see \cite{KT2021_CWM} for details).

In the general case, it is unknown if and when minima come in \emph{unique} pairs or where they are located.
However, for the classes of coupling matrices introduced in Section \ref{sec:weak}, we have a partial answer on the location of the minima.

\begin{prop}
\label{prop:orthants}Let the model  be in the strong coupling regime.
Then the minima of the function $F$ defined in \eqref{eq:F} are
located in specific orthants of $\IR^{M}$:
\begin{enumerate}
\item In the `friendly world' scenario presented in Section \ref{subsec:Positive-Coupling-High}, i.e.\! if $\jj>0 $,
the global minima are found in the positive and the negative orthant, i.e.\! in the sets $\mathcal{O}^{+}=\{ x\mid x_{\lambda}>0 \text{ for all }\lambda \}$ and $\mathcal{O}^{-}=\{ x\mid x_{\lambda}< 0 \text{ for all }\lambda \}$.

\item In the `hostile world' scenario defined in Section \ref{subsec:hostile}
with equal group sizes, global minima are located in $\left(\begin{array}{c}
M\\
M/2
\end{array}\right)$ orthants if $M$ is even and $\left(\begin{array}{c}
M\\
\left(M+1\right)/2
\end{array}\right)$ if $M$ is odd. The orthants in question are those where half the
coordinates (or $\left(M\pm1\right)/2$ if $M$ is odd) are positive
and the other half (or $\left(M\mp1\right)/2$ if $M$ is odd) are
negative.
\item In the  case of the `split world' scenario defined in Section \ref{subsec:Two-Clusters-high}, the global minima are found in the two orthants with positive coordinates
for $\lambda\leq M_{1}$ and negative entries for $\lambda>M_{1}$
and vice versa.
\end{enumerate}
\end{prop}
\begin{proof}
We only show the first of these results. The others can be proved
analogously.

We first show that only the positive and the negative orthant can
contain any global minima. For the friendly world scenario considered
in Section \ref{subsec:Positive-Coupling-Low}, the expression $\frac{1}{2}y^{T}\sqrt{\alpha}J^{-1}\sqrt{\alpha}y$, $y\in \IR^M$,
can be written as
\[
\left(J_{0}+\left(M-1\right)\bar{J}\right)\sum_{\lambda}\alpha_{\lambda}y_{\lambda}^{2}-\bar{J}
\left(\sum_{\lambda}\sqrt{\alpha_{\lambda}}y_{\lambda}\right)^{2}.
\]
We can thus write the function $F$ defined in \eqref{eq:F} as the sum of two auxiliary functions
$F\left(y\right)=f\left(y\right)+g\left(y\right)$, with
\begin{align*}
f\left(y\right) & :=\left(J_{0}+\left(M-1\right)\bar{J}\right)\sum_{\lambda}\alpha_{\lambda}y_{\lambda}^{2}-\sum_{\lambda}\alpha_{\lambda}\ln\cosh y_{\lambda},\\
g\left(y\right) & :=-\bar{J}\left(\sum_{\lambda}\sqrt{\alpha_{\lambda}}y_{\lambda}\right)^{2}.
\end{align*}
Note that $f$ is independent of the sign of the coordinates of the
argument $y$. More precisely, for any $y$ and any sign vectors $s,s'\in\left\{ -1,1\right\} ^{M}$,
we have
\[
f\left(s\circ y\right)=f\left(s'\circ y\right),
\]
where the symbol `$x\circ y$' stands for coordinatewise multiplication
of the two $M$-vectors $x$ and $y$. Therefore, when comparing values
of $F$ between different orthants, we have to look at the function
$g$. For a fixed $y$ with non-negative coordinates, the minimal
point $s\circ y$, $s\in\left\{ -1,1\right\} ^{M}$, of $g$ is the
one which maximises $\left|\sum_{\lambda}\sqrt{\alpha_{\lambda}}s_{\lambda}y_{\lambda}\right|$.
There are two such $s$, namely $s=\left(1,\ldots,1\right)$ and $s=\left(-1,\ldots,-1\right)$.
This shows that, for $s'\in\left\{ -1,1\right\} ^{M}$ with mixed coordinates,
we have $F\left(y\right)=F\left(-y\right)\geq F\left(s'\circ y\right)$.
For any $y$ with strictly positive coordinates, we even have $F\left(y\right)=F\left(-y\right)>F\left(s'\circ y\right)$.
Thus, if a global minimum is located in the interior of an orthant,
said orthant can only be the positive or the negative one.

Next, we prove that the global minima have to lie in the interior
of the positive and negative orthants, i.e.\! there cannot be any
coordinates with the value 0. To obtain a contradiction, assume that
$y^{*}$ is a global minimum located in the positive orthant and its
coordinate $y_{\nu}^{*}$ is 0. We calculate the partial derivative
of $F$ with respect to $y_{\nu}$:
\[
\frac{\partial F}{\partial y_{\nu}}\left(y\right)=2\alpha_{\nu}y_{\nu}\left(J_{0}+\left(M-1\right)\bar{J}\right)-\alpha_{\nu}\tanh y_{\nu}-2\bar{J}\sqrt{\alpha_{\nu}}\sum_{\lambda}\sqrt{\alpha_{\lambda}}y_{\lambda}.
\]
By assumption, we have
\[
\frac{\partial F}{\partial y_{\nu}}\left(y^{*}\right)=-2\bar{J}\sqrt{\alpha_{\nu}}\sum_{\lambda}\sqrt{\alpha_{\lambda}}y_{\lambda}^{*}.
\]
Since the origin is not a minimum of $F$ in the strong coupling
regime, there must be at least one coordinate with $y_{\lambda}^{*}>0$,
and hence $\frac{\partial F}{\partial y_{\nu}}\left(y^{*}\right)<0$
holds. This implies that moving from $y^{*}$ in the positive direction
of the coordinate $\nu$ (into the interior of the positive orthant)
\emph{decreases} the value of $F$. This contradicts the assumption
that $y^{*}$ is a global minimum. Similarly, we can show the claim
that there cannot be coordinates with the value 0 in global minima
in the negative orthant.
\end{proof}

\subsection{\label{subsec:Limit-Theorems} Proof of Theorem \ref{thm:rho1} }

With $\rho=\lim_{N\to\infty} \IE(\chi_{1}\chi_{2}) $ the matrix $A$ has entries $1$ on the diagonal and $\rho$ away from the diagonal.

Recall that in \eqref{prod_measure} we defined for any $t\in \IR$ $P_t$ as the probability measure on $\{-1,1\}$ with $P_t(1)=(1+\tanh t)/2$ and $P_{t}^{\otimes n}$ as the $n$-fold product measure of $P_t$. According to \eqref{eq:hubstra} and using the law of large numbers for the product measures $\prod_{\nu=1}^M P_{x_\nu}^{\otimes N_{\nu}}$, $x\in \IR^M$, the correlation $\rho $ can be written as $\rho\approx\frac{Z_{2}}{Z}$, where
\begin{align}
   Z~&=~\int_{\IR^{M}}\,e^{-NF(x)}\,\textup{d}x,\label{eq:Z}\\
Z_{2}~&=~\int_{\IR^{M}}\,\chi(x_{1})\,\chi(x_{2})\;e^{-NF(x)}\textup{d}x\,. \label{eq:Z2}
\end{align}
Moreover, by adding a constant to $F$, we may assume that $ \min F~=~0$.

For $R$ large enough, we have $F(x)\geq c \|x\|^{2}$ for all $x\not\in B_{R} $, the ball of radius $R$ around the origin.

By Proposition \ref{prop:orthants}, the minima of $F$ lie in the set
\begin{align*}
D_{\delta}:=\{ x\mid x_{\lambda}>\delta \text{ for all } \lambda \} \;\cup\;\{ x\mid x_{\lambda}<-\delta \text{ for all } \lambda \}.
\end{align*}
We can choose $\delta>0$ such that $F(x)\geq \varepsilon>0$ on $\IR^{M}\setminus D_{\delta}$.

We split the integrals \eqref{eq:Z} and \eqref{eq:Z2} in integrals over $D_{\delta}$, over $B_{R}\setminus D_{\delta}$, and over $\IR^{M}\setminus B_{R}$. Then
\begin{align*}
   &I_{1}~:=~\int_{\IR^{M}\setminus B_{R}} e^{-NF(x)}\,\textup{d}x~\leq~\int_{\IR^{M}\setminus B_{R}} e^{-c_{1}N|x|^{2}}~\leq~e^{-c_{2}NR^{2}},\\
   &I_{2}~:=~\int_{B_{R}\setminus D_{\delta}} e^{-NF(x)}\,\textup{d}x~\leq~R^{M}\,e^{-N\varepsilon},
\end{align*}
so these two expression go to zero exponentially fast in $N$.

Suppose $x_{0}$ is a minimum of $F$, so $x_{0}\in D_{\delta}$. Since the norm of the gradient of $F$, $\| \nabla F \| $, is locally bounded, there is a $\gamma>0$ and a constant $c_{2} $ such that
$F(x_{0}+x)\leq c_{2} \| x \|$ for $\|x\|\leq \gamma $.

Thus, we have
\begin{align*}
  I_{3}~:&=~\int_{D_{\delta}} e^{-NF(x)}\,\textup{d}x~\geq~\int_{B_{\gamma}(x_{0})} e^{-NF(x)}\,\textup{d}x\\
~&\geq~\int_{B_{\gamma}} e^{-c N|x|}\,\textup{d}x~\geq~\int_{B_{\gamma/N}} e^{-c N|x|)}~\geq c_{3}\,e^{-c}\;\frac{\gamma^{M}}{N^{M}}.
\end{align*}
Hence, $I_{3}$ is the leading term as $N\to\infty$.

A very similar argument shows, that the leading term for $Z_{2}$ is the integral
\begin{align*}
   I_{3}'~:=~\int_{D_{\delta}}\chi(x_{1})\,\chi(x_{2})\; e^{-NF(x)}\,\textup{d}x~=~\int_{D_{\delta}} e^{-NF(x)}\,\textup{d}x~=~I_{3}
\end{align*}
since $\chi(x_{1})\,\chi(x_{2})=1 $ on $D_{\delta}$.

Consequently,
\begin{align*}
  \IE(\chi_{1}\,\chi_{2})~\approx~\frac{Z_{2}}{Z}~\approx~1\qquad\text{as } N\to\infty.
\end{align*}
This proves $\rho=1 $.

Along the same lines, one proves
\begin{align*}
   \IE\left(S_{\nu}\,\chi_{\lambda}\right)~\approx~\IE\left(|S_{\nu}|\right).
\end{align*}
From this, we conclude \eqref{eq:b}.

\subsection{\label{CBM}Collective Bias Model}

We include this section to introduce and very briefly discuss the collective bias model, another multi-group probabilistic voting model applicable to the same types of problems as the MFM analysed in the present article. For a far more thorough discussion, see \cite{KT2021_CBM}.

Consider the same setup of a general population subdivided into several groups defined in Section \ref{sec:Def}. Instead of giving the most general definition of the collective bias model, which can be consulted in Definition 5 of  \cite{KT2021_CBM}, we give a specific example of a collective bias model.

Let $Z$ and $Y_\lambda$, $\lambda = 1,\ldots,M$, be independent random variables uniformly distributed on the interval $[-1/2,1/2]$, and define for each $\lambda = 1,\ldots,M$ the random variable $T_\lambda \coloneq Z + Y_\lambda$. These random variables represent biases which arise in the population and they may have different magnitudes and signs depending on the issue at hand. $Z$ represents a prevalent global bias which exists across group boundaries, and each $Y_\lambda$ represents a group bias specific to group $\lambda$. These two biases may have the same or different signs independently of each other. The sum of these two biases  $T_\lambda$ gives the overall bias prevalent in group $\lambda$. If the sign of $T_\lambda$ is positive, each voter in group $\lambda$ has a higher probability of voting `yes'; if $T_\lambda$ is negative, each voter has a higher probability of voting `no'. Given a realisation of $T_\lambda$, each voter casts their vote independently of everyone else. However, the bias introduces a correlation between the individual votes.

Recall the definition of the probability measures $P_t$ on $\{-1,1\}$ for each $t \in [-1,1]$ given in \eqref{prod_measure} and the product measure $P_{t}^{\otimes n} $ immediately afterwards. Then the voting measure that assigns the probability
\begin{align*}
& \qquad \IP\left(X_{11}=x_{11},\ldots,X_{MN_{M}}=x_{MN_{M}}\right)\\
& =\int_{-1/2}^{1/2}\left(\int_{-1/2}^{1/2}P_{z+y_{1}}^{\otimes N_1} \left(x_{11},\ldots,x_{1N_{1}}\right)\textup{d}y_{1}\cdots\int_{-1/2}^{1/2}P_{z+y_{M}}^{\otimes N_M} \left(x_{M1},\ldots,x_{MN_{M}}\right)\textup{d}y_{M}\right)\textup{d}z
\end{align*}
to each voting configuration $\left(x_{11},\ldots,x_{MN_{M}}\right)\in\left\{ -1,1\right\} ^{N}$ is a collective bias model. This is the model treated in Section 8.1.1 of \cite{KT2021_CBM}.

For this collective bias model, the optimal weights which minimise the democracy deficit are given by the formula
\begin{align*}
w_\lambda = \frac{1}{4} \alpha_\lambda + \frac{1}{4} \frac{1}{M + 2}
\end{align*}
for each group $\lambda$. We see that the optimal weight is composed of the sum of two terms: one term is proportional to the size of the population and the other is constant and the same for all groups. Similar results hold under far more general assumptions than the example presented here. It is a formula for voting weights which is akin to how the Electoral College in the United States of America is composed. It stands in contrast to the optimal weights for the MFM, which does not feature uniquely determined weights with a summand which is proportional to the size of each group and a constant summand.


\begin{thebibliography}{10}
\bibitem{Aldous} Aldous, David: Exchangeability and related topics. \'Ecole d'\'et\'e de probabilit\'es de Saint-Flour, XIII--1983, 1--198, Lecture Notes in Math., 1117, Springer, Berlin, 1985.

\bibitem{Barbera} Barber\`{a}, Salvador; Jackson, Matthew: On the Weights of Nations: Assigning Voting Weights in a Heterogeneous Union, Journ. Pol. Econ. 114 (2): 317--339 (2006)

\bibitem{Banzhaf} Banzhaf, John: Weightd Voting Doesn't Work: A Mathematical Analysis, Rutgers Law Review 19, 317--343 (1965)

\bibitem{BT}Bertsekas, Dimitri and Tsitsiklis, John: Introduction
to Probability, Second Edition, Athena Scientific (2008)

\bibitem{Birkmeie2011}Birkmeier, Olga: Machtindizes und Fairness-Kriterien in gewichteten Abstimmungssystemen mit Enthaltungen,
Logos Verlag, Berlin (2011)

\bibitem{BirKaePu2011}Birkmeier, Olga; K\"aufl, Andreas; and Pukelsheim, Friedrich: Abstentions in the German Bundesrat and
Ternary Decision Rules in Weighted Voting Systems. Statistic. Decisions, 28(1):1-16 (2011)

\bibitem{BD}Brock, William and Durlauf, Steven: Discrete Choice
with Social Interactions, Review of Economic Studies, Oxford University
Press, vol. 68(2), pages 235-260. (2001)

\bibitem{CGh}Contucci, Pierluigi and Ghirlanda, Stefano: Modelling Society
with Statistical Mechanics: an Application to Cultural Contact and
Immigration. Quality and Quantity, 41, 569-578 (2007)

\bibitem{Ellis} Ellis, Richard: Entropy, Large Deviations, and Statistical Mechanics, Springer (1985)

\bibitem{Fedele} Fedele, Micaela and Contucci, Pierluigi: Scaling Limits for Multi-species Statistical Mechanics Mean-Field Models, J. Stat. Phys. 144, 1186--1205 (2011)

\bibitem{FelsMach1997}Felsenthal, Dan S. and Machover, Mosh\'e: Ternary Voting Games, Int. J. Game Theory, 26:335-351 (1997)

\bibitem{Fel1998}Felsenthal, Dan S. and Machover, Mosh\'e: The Measurement of
Voting Power, Cheltenham (1998)

\bibitem{Fel1999} Felsenthal, Dan and Machover, Moshe: Minimizing the mean majority deficit: The second square-root rule,
Mathematical Social Sciences, 37, 25--37 (1999)

\bibitem{Fel2005}Felsenthal, Dan and Machover, Moshe: Voting power measurement: a story of misreinvention,
Soc Choice Welfare 25, 485--506 (2005)

\bibitem{F=0000F6}F\"{o}llmer, Hans: Random economies with many interacting
agents, Journal of Mathematical Economics, Volume 1, Issue 1, 51-62
(1974)

\bibitem{GBC} Gallo, Ignacio; Barra, Adriano; Contucci, Pierluigi;
Parameter Evaluation of a Simple Mean-Field Model of Social Interaction,
Math. Models Methods Appl. Sci., 19 (suppl.), pp. 1427-1439 (2009)

\bibitem{GK1968} Garman, Mark and Kamien, Morton: The Paradox of Voting:
Probability Calculations, Behavioral Science, 13, 306--316 (1968)

\bibitem{GL2017} Gehrlein, William and Lepelley Dominique: Elections, Voting Rules
and Paradoxical Outcomes; Studies in Choice and Welfare, Springer
(2017)

\bibitem{Gui1952}Guilbaud, Georges-Th\'eodule: Les th\'eories de l'int\'eret g\'en\'eral et le probl\`eme
logique de l'agr\'egation [Theories of the general interest and logical
problems of aggregation]. Economie Appliqu\'ee, 5, 501-584 (1952)

\bibitem{SRL} Kirsch, Werner: On Penrose's Square-root Law and Beyond,
Homo Oeconomicus 24(3/4): 357--380, (2007)

\bibitem{K-MS} Kirsch, Werner: The Curie-Weiss model -- an approach using moments,
M\"{u}nster J. Math. 13, no. 1, 205--218 (2020)

\bibitem{KL1}Kirsch, Werner and Langner, Jessica: The Fate of the Square
Root Law for Correlated Voting, in: Fara R., Leech D., Salles M. (eds),
Voting Power and Procedures. Studies in Choice and Welfare. Springer,
2014

\bibitem{KT2021_CWM}Kirsch, Werner and Toth, Gabor: Limit Theorems for
Multi-Group Curie-Weiss Models via the Method of Moments, Math. Phys. Anal. Geom., Vol. 25, No. 24, arXiv:2102.05903
(2022)

\bibitem{KT2021_CBM}Kirsch, Werner and Toth, Gabor: Collective Bias
Models in Two-Tier Voting Systems and the Democracy Deficit, Math. Soc. Sci., Vol. 119, 118-137, arXiv:2102.12704
(2022)

\bibitem{Loewe} Kn\"{o}pfel, Holger; L\"{o}we, Matthias; Schubert, Kristina; Sinulis, Arthur: Fluctuation Results for General Block Spin Ising Models. J Stat Phys 178, 1175--1200 (2020).

\bibitem{Kor} Koriyama, Yukio; Laslier, Jean-Fran\c{c}ois; Mac\'{e}, Antonin; Treibich, Rafael: Optimal Apportionment.
Journ. Pol. Econ. 121 (3): 584--608 (2013).

\bibitem{KMN2017} Kurz, Sascha; Maaser, Nicola; Napel, Stefan: On the Democratic Weights of Nations,
Journ. Pol. Econ. 125 (5): 1599--1634 (2017).

\bibitem{KMN2021}Kurz, Sascha; Mayer, Alexander; Napel, Stefan: Influence in Weighted
Committees, European Economic Review 132, 103634 (2021)

\bibitem{Langner}Langner, Jessica: Fairness, Efficiency and Democracy
Deficit. Combinatorial Methods and Probabilistic Analysis on the Design
of Voting Systems, PhD Thesis (2012)

\bibitem{LSV}L\"owe, Matthias; Schubert, Kristina; Vermet, Franck: Multi-group
Binary Choice with Social Interaction and a Random Communication Structure---A
Random Graph Approach, Physica A: Statistical Mechanics and its Applications,
Volume 556, 2020.

\bibitem{OEA}Opoku, Alex; Edusei, Kwame; Ansah, Richard: A Conditional
Curie--Weiss Model for Stylized Multi-group Binary Choice with Social
Interaction, J Stat Phys 171, 106--126 (2018).

\bibitem{Penrose}Penrose, Lionel: The Elementary Statistics
of Majority Voting, Journal of the Royal Statistical
Society, Blackwell Publishing, 109 (1): 53--57, (1946)

\bibitem{ShapShub1954}Shapley, Lloyd Stowell and Shubik, Martin: A Method for Evaluating the Distribution of Power in a Committee System, Am. Polit. Sci. Rev., 48(3): 787--792, 1954

\bibitem{Straffin} Straffin, Philip: Power indices in politics, in: Brams S., Lucas W., Straffin P. (Eds.):
 Political and related models, Springer (1982)

\bibitem{Toth}Toth, Gabor: Correlated Voting in Multipopulation Models,
Two-Tier Voting Systems, and the Democracy Deficit, PhD Thesis, FernUniversit\"at
in Hagen. (2020)\\ https://doi.org/10.18445/20200505-103735-0

\end{thebibliography}
\end{document}